\documentclass{amsart}
\usepackage{amssymb, amsmath}
\usepackage[dvips]{graphicx}
\usepackage{amsfonts}
\usepackage{latexsym}
\usepackage{color}
\newtheorem{theorem}{Theorem}	
\newtheorem{lemma}{Lemma}[section]		
\newtheorem{corollary}{Corollary}		
\newtheorem{proposition}{Proposition}

\begin{document}
\title[Behavior of convex integrand at apex of its Wulff shape]
{Behavior of convex integrand at apex of its Wulff shape}
\author{
Huhe Han}
\address{College of Science, Northwest Agriculture and Forestry University, China}
\email{han-huhe@nwafu.edu.cn}
\subjclass[2020]{52A20, 52A55}
\keywords{Wulff shape, convex integrand, apex, approximation,  
constant width, Hausdorff distance}
\begin{abstract}
Let $\gamma: S^n\to \mathbb{R}_+$ be a convex integrand and $\mathcal{W}_\gamma$ be the Wulff shape of $\gamma$. 
Apex point naturally arise in non-smooth Wulff shape, in particular, vertex of convex polytope.
In this paper, 
we study the behavior of convex integrand around apex point of its Wulff shape.
We prove that 
$\gamma(P)$ is locally  maximum, and $\mathbb{R}_+ P\cap \partial \mathcal{W}_\gamma$ is an apex point of 
$\mathcal{W}_\gamma$ 
if and only if the graph of $\gamma$ around the apex point is a pice of sphere.
As an application of the proof of this result, we prove that 
for any spherical convex body $C$ of constant width $\tau>\pi/2$,
 there exists a sequence $\{C_i\}_{i=1}^\infty$ of convex bides of constant width $\tau$, 
whose boundary consists only of arcs of circles of radius $\tau-\frac{\pi}{2}$ and great circle segments such that 
$\lim_{i\to \infty}C_i=C$
with respect to the Hausdorff distance.
\end{abstract}
\maketitle

\section{Introduction}
Let $S^n$ and $\mathbb{R}_+$ be $n$ dimensional unit sphere in $\mathbb{R}^{n+1}$ and the set consisting of positive real numbers respectively. 
Let $\gamma:S^n\to \mathbb{R}_+$ be a continuous function. 
The {\it Wulff shape}  associated with support function $\gamma$, denoted by $\mathcal{W}_\gamma$, is the following set:
\[
\bigcap_{\theta\in S^n}\Gamma_{\gamma,\theta}.
\]
Here $\Gamma_{\gamma,\theta}$ is the half space 
\[
\{x\in\mathbb{R}^{n+1}|x\cdot\theta\leq \gamma(\theta)\},
\] 
where the dot in the center stands for the dot product of two vectors $x, \theta$ of $\mathbb{R}^{n+1}$.
By definition,  Wulff shape $\mathcal{W}_\gamma$ is compact, convex and the origin of $\mathbb{R}^{n+1}$ is an interior point of $\mathcal{W}_\gamma$.
The Wulff shape was first introduced by G.Wulff in \cite{wulff}, is known as a geometric model of a crystal at equilibrium (see for instance \cite {crystalbook,taylor}). 
Let $\mbox{inv}: \mathbb{R}^{n+1}-\{\bf 0\}\to\mathbb{R}^{n+1}-\{\bf 0\}$ be the 
{\it inversion} with respect to the origin {\bf 0} of $\mathbb{R}^{n+1}$,  is defined by
\[
\mbox{inv}(\theta, r)=(-\theta, \frac{1}{r}),
\]
where $(\theta, r)$  
is the polar plot expression for a point of $\mathbb{R}^{n+1}-{\bf 0}$. 
For given Wulff shape there are many support functions in general. 
It is known that for any continuous funtios $\gamma_1, \gamma_2: S^n\to \mathbb{R}_+$, 
the equality $\mathcal{W}_{\gamma_1}=\mathcal{W}_{\gamma_2}$ holds if and only if
the convex hull of $\{\mbox{inv}(\theta, \gamma_1(\theta))\mid \theta\in S^n\}$ 
 equaling
the convex hull of 
$\{
\mbox{inv}(\theta, \gamma_2(\theta))\mid \theta\in S^n
\}
$
exactly(\cite{nishimurasakemi2, taylor}).
A support function $\gamma$ is said to be {\it convex integrand} of $\mathcal{W}_\gamma$ if the set 
\[
\left\{
\mbox{inv}(\theta, \gamma(\theta))\mid \theta\in S^n
\right\}
\]
is the boundary of a convex body of $\mathbb{R}^{n+1}$. 
The definition of convex integrand was firstly introduced by J.E. Taylor in \cite{taylor}, 
and it plays a key role for studying Wulff shapes. 
For example, 
it is known that Wulff shape $\mathcal{W}$ is strictly convex if and only if its convex integrand
$\gamma$ is of class $C^1$ (\cite{hnams}); 
Wulff shape $\mathcal{W}$ is uniformly convex if and only if its convex
integrand $\gamma$ is of class $C^{1,1}$ (\cite{morgan91}).
\par
We say a boundary point $P$ of a plane convex body $C$ is {\it smooth} if exactly one line supports  $C$ at $P$. 
Let $Gr(n+1,2)^P$ be a Grassmannian subspace $Gr(n+1,2)$ of $\mathbb{R}^{n+1}$ contains $P$.
We say a boundary point $P$ of Wulff shape $\mathcal{W}_\gamma\subset \mathbb{R}^{n+1}$ is an {\it apex} point if 
$P$ is a non-smooth point of 
$\mathcal{W}_\gamma\cap Gr(n+1,2)^P$ for any $Gr(n+1,2)^P$.  
\par
In this paper, we study the behavior of a convex integrand around an 
apex point of its Wulff shape.
\begin{theorem}\label{maintheorem}
Let $\gamma: S^n\to \mathbb{R}_+$ be a convex integrand, and 
let $P$ be an element of $S^n$ and 
Set $B_s^n(P,\delta)=\{Q\in S^n\mid \arccos(P\cdot Q)<\delta\}$.
Then following three statements are equivalent:
\begin{enumerate}
\item there exists a positive number $\delta$ such that 
$\gamma(Q)=\gamma(P)P\cdot Q$ for any point $Q\in B_s^n(P, \delta)$;
\item $\gamma(P)$ is locally  maximum, and 
$\mathbb{R}_+ P\cap \partial \mathcal{W}_\gamma$ is an apex point of 
$\partial\mathcal{W}_\gamma$;
\item there exists a positive number $\delta$ such that 
\[
\{(-Q, \frac{1}{\gamma(Q)})\mid Q\in B_s^n(P,\delta)\}
\] is a pice of $\mathcal{H}^n$, where 
$\mathcal{H}^n$ is the $n-$dimensional affine hyperplane through $\frac{1}{\gamma(P)}(-P),$ and has a normal vector as $P$.
\end{enumerate}
\end{theorem}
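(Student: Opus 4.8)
The plan is to prove $(1)\Leftrightarrow(3)$ directly and $(1)\Leftrightarrow(2)$, with $(2)\Rightarrow(1)$ as the substantial direction. I use freely that a convex integrand is the support function $h_{\mathcal{W}_\gamma}$ of its Wulff shape, that $\mathcal{W}_\gamma^{\circ}=\operatorname{conv}\{\theta/\gamma(\theta):\theta\in S^n\}$, and that $\{\operatorname{inv}(\theta,\gamma(\theta)):\theta\in S^n\}$ is the boundary of the convex body $\widetilde K=-\mathcal{W}_\gamma^{\circ}$. For $(1)\Leftrightarrow(3)$: the point $\operatorname{inv}(Q,\gamma(Q))=\tfrac{1}{\gamma(Q)}(-Q)$ lies on $\mathcal{H}^n=\{y:y\cdot P=-1/\gamma(P)\}$ exactly when $-\tfrac{Q\cdot P}{\gamma(Q)}=-\tfrac{1}{\gamma(P)}$, i.e.\ when $\gamma(Q)=\gamma(P)(P\cdot Q)$; since $Q\mapsto\operatorname{inv}(Q,\gamma(Q))$ is a homeomorphism of $S^n$ onto $\partial\widetilde K$, the image of $B_s^n(P,\delta)$ is relatively open in $\partial\widetilde K$, so for it ``being a piece of $\mathcal{H}^n$'' is the same as ``being contained in $\mathcal{H}^n$'', and the two statements match. (Note also that $\gamma(Q)=\gamma(P)(P\cdot Q)$ is the equation $|x|^2=\gamma(P)\,(x\cdot P)$ of a sphere through $0$, which is the ``piece of sphere'' picture.)

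For $(1)\Rightarrow(2)$ set $x_0=\gamma(P)P$. Condition $(1)$ pushes a relatively open piece of $\partial\widetilde K$ into $\mathcal{H}^n$, so $\mathcal{H}^n$ supports $\widetilde K$; since $0\in\operatorname{int}\widetilde K$ and $0\cdot P>-1/\gamma(P)$ we get $\widetilde K\subset\{y:y\cdot P\ge-1/\gamma(P)\}$, which unwinds to $\gamma(P)(\theta\cdot P)\le\gamma(\theta)$ for all $\theta\in S^n$, hence $x_0\in\mathcal{W}_\gamma$. For $Q\in B_s^n(P,\delta)$ one has $x_0\cdot Q=\gamma(P)(P\cdot Q)=\gamma(Q)=h_{\mathcal{W}_\gamma}(Q)$, so $Q\in N(\mathcal{W}_\gamma,x_0)$; thus $x_0\in\partial\mathcal{W}_\gamma$, so $x_0=\mathbb{R}_+P\cap\partial\mathcal{W}_\gamma$, and $N(\mathcal{W}_\gamma,x_0)$ contains the open cone over $B_s^n(P,\delta)$, a neighbourhood of the ray $\mathbb{R}_+P$. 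For any $2$-plane $\Pi$ through $0$ and $x_0$ (hence through $P$), the normal cone of $\mathcal{W}_\gamma\cap\Pi$ at $x_0$ inside $\Pi$ contains the projection to $\Pi$ of this open cone, which is $2$-dimensional because $\Pi$ meets its interior; so $x_0$ is non-smooth in every such slice, i.e.\ an apex point. Finally $\gamma(Q)=\gamma(P)(P\cdot Q)\le\gamma(P)$ on $B_s^n(P,\delta)$, so $\gamma(P)$ is a local maximum.

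For $(2)\Rightarrow(1)$ put $x_0=\mathbb{R}_+P\cap\partial\mathcal{W}_\gamma$ and $N=N(\mathcal{W}_\gamma,x_0)$. \emph{(i) $N$ is full-dimensional:} if $N\subset w^{\perp}$ for some $w\ne0$, a $2$-plane $\Pi$ through $0,P,w$ has $\operatorname{proj}_\Pi N\subset\Pi\cap w^{\perp}$, a line, so $\mathcal{W}_\gamma\cap\Pi$ is smooth at $x_0$, against the apex hypothesis; hence $N$ lies in no hyperplane through $0$ and is full-dimensional (so $x_0$ is a vertex). \emph{(ii) $x_0=\gamma(P)P$:} ``$\gamma(P)$ locally maximum'' says $\rho_{\mathcal{W}_\gamma^{\circ}}=1/\gamma$ has a local minimum at $P$; for the boundary point $z_0=\rho_{\mathcal{W}_\gamma^{\circ}}(P)P=\tfrac{1}{\gamma(P)}P$ and any unit outer normal $n$ of $\mathcal{W}_\gamma^{\circ}$ there, the bound $\rho_{\mathcal{W}_\gamma^{\circ}}(u)\le \rho_{\mathcal{W}_\gamma^{\circ}}(P)(P\cdot n)/(u\cdot n)$ near $P$, compared to first order along $u=(P+tv)/|P+tv|$ with $v\perp P$, forces $v\cdot n\le0$ for all such $v$, hence $n=P$; so $z_0$ is a smooth point of $\mathcal{W}_\gamma^{\circ}$, $h_{\mathcal{W}_\gamma^{\circ}}(P)=z_0\cdot P=1/\gamma(P)$, thus $\rho_{\mathcal{W}_\gamma}(P)=\gamma(P)$ and $x_0=\gamma(P)P$. \emph{(iii) $P\in\operatorname{int}N$:} for these dual boundary points $N=\operatorname{cone}(F_P)$ with $F_P=\{y\in\mathcal{W}_\gamma^{\circ}:y\cdot P=h_{\mathcal{W}_\gamma^{\circ}}(P)\}$; by (i) this cone is full-dimensional and, since $F_P$ lies in the hyperplane $\{y\cdot P=1/\gamma(P)\}$ which avoids $0$, $F_P$ must be $n$-dimensional; smoothness of $z_0$ places $z_0$ in the relative interior of $F_P$, whence $z_0\in\operatorname{int}N$ and so $P\in\operatorname{int}N$. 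Then $B_s^n(P,\delta)\subset N$ for some $\delta>0$, and for each such $Q$ we get $\gamma(Q)=h_{\mathcal{W}_\gamma}(Q)=x_0\cdot Q=\gamma(P)(P\cdot Q)$, which is $(1)$.

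The main obstacle is step (ii) of $(2)\Rightarrow(1)$: the hypothesis that $\gamma(P)$ is locally maximum is rather weak — on its own it does not even determine $x_0$ — and the work is to extract from it, via the first-order estimate for the radial function of $\mathcal{W}_\gamma^{\circ}$ near $P$, both that the support point of $\mathcal{W}_\gamma$ in direction $P$ is exactly $\gamma(P)P$ and that the supporting hyperplane with normal $P$ meets $\mathcal{W}_\gamma$ only there; only after that does the apex hypothesis, through full-dimensionality of the normal cone in step (i), close the argument. A secondary point requiring care is that, for general convex bodies as opposed to polytopes, the correspondence in step (iii) between faces and normal cones and the associated dimension counts behave less rigidly and must be verified directly.
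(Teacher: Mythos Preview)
Your proof is correct and follows a genuinely different route from the paper's. The paper lifts everything to $S^{n+1}$ via $\mathrm{Id}$, $\alpha_N$, $\Psi_N$ and argues through spherical polar sets: for $(1)\Rightarrow(2)$ it shows, in each $2$-dimensional slice through $P$, that the inversion image of $\mathrm{graph}(\gamma)$ is carried to a great-circle segment in the boundary of the spherical dual $(\widetilde{\mathcal{W}}_\gamma^{Gr})^{\circ}$, and then invokes Lemma~3.1 to read off non-smoothness of the primal; $(2)\Rightarrow(3)$ is handled by pointing back to this construction and reversing it; $(3)\Rightarrow(1)$ is the same one-line computation you give. You instead stay entirely in $\mathbb{R}^{n+1}$ and use normal cones and Euclidean polarity: the apex hypothesis becomes full-dimensionality of $N(\mathcal{W}_\gamma,x_0)$, and the local-maximum hypothesis is converted, through a first-order estimate on the radial function of $\mathcal{W}_\gamma^{\circ}$, into smoothness of $z_0=P/\gamma(P)$ with unique outer normal $P$; this simultaneously identifies $x_0=\gamma(P)P$ and places $P$ in $\mathrm{int}\,N$. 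Your argument makes explicit where each half of hypothesis~$(2)$ is consumed---in particular your step~(ii) isolates the role of the local maximum in forcing $\rho_{\mathcal{W}_\gamma}(P)=\gamma(P)$, a point the paper's $(2)\Rightarrow(3)$ does not articulate. The paper's spherical framework, on the other hand, is what feeds directly into the proof of Theorem~2.
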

Since 
\[
||\gamma(Q)Q-\frac{1}{2}\gamma(P)P||
=\left((\gamma(Q)Q-\frac{1}{2}\gamma(P)P)\cdot (\gamma(Q)Q-\frac{1}{2}\gamma(P)P)\right)^{\frac{1}{2}}
=\frac{1}{2}\gamma(P),
\]
by Assertion (1) of Theorem \ref{maintheorem}, the graph of the convex integrand $\gamma$ around an apex point $\mathbb{R}_+ P\cap \partial \mathcal{W}_\gamma$ of its Wulff shape is 
a pice of sphere with center $\frac{1}{2}\gamma(P)P$ and radius $\frac{1}{2}\gamma(P)$.
Here, $||\cdot ||$ denotes the standard Euclidean norm.
\par
\bigskip
As an application of the proof of Theorem \ref{maintheorem}, we have the following:
\begin{theorem}\label{theorem2}
For any spherical convex body $\widetilde{C}\subset S^2$ of constant width $\tau>\pi/2$, 
and for any $\varepsilon>0$ 
there exists a body $\widetilde{C}_\varepsilon$ of constant width $\tau$ 
whose boundary consists only of arcs of circles of radius $\tau-\frac{\pi}{2}$ and great circle segments, 
such that 
\[
h(\widetilde{C}, \widetilde{C}_\varepsilon)\leq\varepsilon,
\] 
where $h$ is the Hausdorff distance.
\end{theorem}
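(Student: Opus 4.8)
The plan is to transfer the approximation problem to the Wulff-shape / convex-integrand picture and then reduce it to the equivalence established in Theorem \ref{maintheorem}. First I would recall the standard correspondence (via central projection, or the gnomonic map) between spherical convex bodies of constant width $\tau$ in $S^2$ and plane convex bodies together with their support functions; under this dictionary a spherical body $\widetilde C$ of constant width $\tau>\pi/2$ corresponds to a convex integrand $\gamma$ on $S^2$ whose Wulff shape $\mathcal W_\gamma$ has the dual constant-width property, and the constant-width constraint becomes the functional relation $\gamma(\theta)+\gamma(-\theta)=c(\tau)$ for a constant depending only on $\tau$. The key observation, taken from the proof of Theorem \ref{maintheorem}, is that the local condition ``the graph of $\gamma$ near $P$ is a piece of the sphere of center $\tfrac12\gamma(P)P$ and radius $\tfrac12\gamma(P)$'' is exactly the analytic shadow of ``$\mathbb R_+P\cap\partial\mathcal W_\gamma$ is an apex point'' — and, crucially, a spherical arc of radius exactly $\tau-\tfrac\pi2$ on the boundary of $\widetilde C$ corresponds to such a spherical piece of the graph of $\gamma$, while a great-circle segment corresponds to a flat (totally geodesic) piece. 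So the target class of bodies $\widetilde C_\varepsilon$ is precisely the class whose convex integrands are piecewise built from spherical caps of the one allowed radius and affine pieces.

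Next I would carry out the approximation at the level of the convex integrand. Given $\widetilde C$ with integrand $\gamma$, pass to the hypersurface $M=\{\mathrm{inv}(\theta,\gamma(\theta))\mid\theta\in S^2\}=\partial K$ for a convex body $K\subset\mathbb R^3$. Approximate $K$ in Hausdorff distance by convex bodies $K_i$ whose boundary is a union of spherical caps (all of a single prescribed curvature radius, chosen to match $\tau-\tfrac\pi2$ after inversion) glued to planar facets; this is the Euclidean analogue of the classical fact that any convex body is a Hausdorff limit of bodies with $C^{1,1}$ boundary, here refined so that the smooth pieces are round caps of one fixed radius and the non-smooth structure is polygonal. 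Concretely one can inscribe/circumscribe finitely many balls of that radius together with finitely many supporting half-spaces; taking intersections and convex hulls produces $K_i\to K$. The functional relation $\gamma(\theta)+\gamma(-\theta)=c(\tau)$ must be preserved, so I would symmetrize the construction: build $K_i$ so that $\mathrm{inv}$ of its boundary is invariant under the appropriate central reflection, guaranteeing the limit bodies still have constant width $\tau$.

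Then I would translate back: Hausdorff convergence $K_i\to K$ in $\mathbb R^3$ gives uniform convergence of the associated convex integrands $\gamma_i\to\gamma$ on $S^2$, and uniform convergence of support functions gives Hausdorff convergence $\mathcal W_{\gamma_i}\to\mathcal W_\gamma$, which under the spherical dictionary yields $\widetilde C_i\to\widetilde C$ in the spherical Hausdorff distance; choosing $i$ large enough gives the desired $\widetilde C_\varepsilon$. By construction each $\widetilde C_i$ has boundary made only of arcs of radius $\tau-\tfrac\pi2$ (the images of the round caps, by the computation following Theorem \ref{maintheorem} identifying such caps with apex points) and great-circle segments (the images of the planar facets). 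The main obstacle I anticipate is the \emph{rigidity of the allowed radius}: one is not free to use caps of arbitrary curvature, only radius exactly $\tau-\tfrac\pi2$, so the approximating hypersurfaces in $\mathbb R^3$ must be assembled from balls of one fixed size, and one must check that such pieces can be glued consistently to a genuine \emph{convex} body while simultaneously respecting the central-symmetry relation that encodes constant width — this compatibility (and controlling it quantitatively in $\varepsilon$) is where the real work lies; the passage between $S^2$ and $\mathbb R^3$ and the continuity of all the maps involved is routine by comparison.
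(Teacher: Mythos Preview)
Your proposal has genuine gaps and also takes a much harder route than necessary.

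\textbf{Gaps.} First, the central approximation step --- constructing convex bodies whose boundary is a union of spherical caps of \emph{one fixed prescribed radius} together with planar facets, while simultaneously preserving the symmetry that encodes constant width --- is never carried out. You yourself flag this as ``where the real work lies''; as written, the proposal identifies the obstacle but does not overcome it, so it is a sketch rather than a proof. Second, the claimed dictionary ``spherical constant width $\tau$ $\Longleftrightarrow$ $\gamma(\theta)+\gamma(-\theta)=c(\tau)$'' is asserted without justification and is not correct in general: gnomonic projection does not carry spherical width linearly to Euclidean width, so the symmetrization you propose has no well-defined target. Third, there is a dimensional slip: for $\widetilde C\subset S^2$ the paper's correspondence (central projection $S^{n+1}\to\mathbb R^{n+1}$) produces a Wulff shape in $\mathbb R^2$ with integrand on $S^1$, not $S^2$ and $\mathbb R^3$ as you write.

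\textbf{What the paper actually does.} The paper never leaves the sphere and never builds an approximation from scratch. Since $\widetilde C$ has constant width $\tau>\pi/2$, its spherical polar $\widetilde C^{\circ}$ has constant width $\pi-\tau<\pi/2$. Lassak's theorem (quoted in the paper as Theorem~\ref{theorem3}) already supplies, for bodies of constant width below $\pi/2$, approximants whose boundaries consist only of circular arcs of radius $\pi-\tau$; apply it to $\widetilde C^{\circ}$ to get $\widetilde C_\varepsilon^{\circ}$ with $h(\widetilde C^{\circ},\widetilde C_\varepsilon^{\circ})\le\varepsilon$. Because the spherical polar transform is an isometry for the Hausdorff distance (Proposition~\ref{dualisometric}), the same inequality holds for $\widetilde C$ and $\widetilde C_\varepsilon:=(\widetilde C_\varepsilon^{\circ})^{\circ}$, and the latter again has constant width $\tau$. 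The only remaining point is a short lemma (Lemma~\ref{lemmaboundary}): under polarity, a boundary made of arcs of radius $\delta<\pi/2$ becomes a boundary made of arcs of radius $\pi/2-\delta$ together with great-circle segments. Its proof is exactly where the machinery from the proof of Theorem~\ref{maintheorem} enters, via Lemma~\ref{lemmanonsmooth}: smooth points on an arc of radius $\delta$ dualize to points on an arc of radius $\pi/2-\delta$, while each non-smooth corner dualizes to a great-circle segment. Thus the ``hard'' approximation is entirely outsourced to Lassak's existing result, and the passage through Euclidean Wulff shapes is unnecessary.
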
 
Theorem \ref{theorem2} is a counterpart of the result of \cite{Lassak22-1}, which says that 
$\lq\lq$ For any spherical convex body $\widetilde{C}\subset S^2$ of constant width $\tau<\pi/2$, 
and for arbitrary $\varepsilon>0$ 
there exists a body $\widetilde{C}_\varepsilon\subset S^2$ of constant width $\tau$ 
whose boundary consists only of arcs of circles of radius $\tau$ 
such that 
the Hausdorff distance between $\widetilde{C}$ and $\widetilde{C}_\varepsilon$ is at most $\varepsilon $
(Theorem \ref{theorem3}). 
In Euclidean plane, following \cite{banchoffgiblin94}, a {\it PC curve}, or piecewise circular curve, is given by a finite sequence of circular arcs or line segments, with the endpoint of one arc coinciding with the beginning point of the next. More details on $PC$ curves, see for instance \cite{banchoffgiblin94, banchoffgiblin93}. 
Since $\lq\lq$straight line$"$ on $S^2$ is great circle, from Theorem 2, 
one obtains that any spherical convex body of constant width $\tau>\pi/2$  can be approximated by a body of constant width $\tau$ whose boundary is a spherical $PC$ curve.
\par
\bigskip
This paper is organized as follows. 
 In Section 2, preliminaries are given.
The proofs of Theorem \ref{maintheorem} and Theorem \ref{theorem2} are given in Section 3, Section 4 respectively.

\section{Premilinaries}
\subsection{An equivalent definition of Wulff shape}
In \cite{nishimurasakemi2}, T.\ Nishimura and Y.\ Sakemi presented 
an equivalent definition of Wulff shape as the composition of the following mappings.
\par (1)
Let $Id: \mathbb{R}^{n+1}\to \mathbb{R}^{n+1}\times \{1\}\subset \mathbb{R}^{n+2}$
be the mapping defined by
\[
Id(x)=(x,1).
\]
\par (2)
Let $N=(0,\ldots,0,1)\in \mathbb{R}^{n+2}$ denote the north pole of $S^{n+1}$,
and let $S_{N,+}^{n+1}$ denote the north open hemisphere of $S^{n+1}$, that is,
\[
S_{N,+}^{n+1}=\{\widetilde{Q}\in S^{n+1}| N\cdot \widetilde{Q}>0\}.
\]
The {\it central projection relative to} $N$, denoted by
$\alpha_N: S_{N,+}^{n+1}\to \mathbb{R}^{n+1}\times \{1\}$, is defined by
\[
\alpha_N\left(P_1, \ldots, P_{n+1}, P_{n+2}\right)
=
\left(\frac{P_1}{P_{n+2}}, \ldots, \frac{P_{n+1}}{P_{n+2}}, 1\right).
\]
\par (3)
The {\it spherical blow-up} (with respect to $N$) $\Psi_N:S^{n+1}-\{\pm N\}\to {\color{black}S_{N, +}^{n+1}}$, 
is defined by 
\[
\Psi_N(\widetilde{P})=\frac{1}{\sqrt{1-(N\cdot \widetilde{P})^2}}(N-(N\cdot \widetilde{P})\widetilde{P}).    
\]
The mapping $\Psi_N$, which was first introduced in \cite{nishimura}. 
By definition, it is obvious that  
\[
|\Psi_N(\widetilde{P})\widetilde{P}|=\pi/2\]
for any point $\widetilde{P}\in S^{n+1}-\{\pm N\}$.
Using $\Psi_N$, the inversion 
$\mbox{inv}:\mathbb{R}^{n+1}-{\bf 0}\to\mathbb{R}^{n+1}-{\bf 0}$ 
can be characterized as follows:
\begin{proposition}[\cite{hnams}]\label{psi}
\[
\mbox{{\rm inv}}=\mbox{Id}^{-1}\circ \alpha_N \circ \Psi_N\circ \alpha_N^{-1} \circ{\mbox Id}.
\]
\end{proposition}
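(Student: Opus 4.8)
The plan is to prove the identity by a single direct computation, tracking an arbitrary point through the five-fold composition on the right-hand side and checking that the output coincides with $\mathrm{inv}$. Fix $x\in\mathbb{R}^{n+1}-\{\mathbf{0}\}$ and write it in polar form $x=r\theta$ with $r>0$ and $\theta\in S^n$. The goal is to show that the composite sends $x$ to $\frac{1}{r}(-\theta)$, which is precisely $\mathrm{inv}(\theta,r)=(-\theta,1/r)$. Since $x$ is arbitrary, this equality of images forces the equality of the two maps.

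First I would apply $\mathrm{Id}$ to land at $(r\theta,1)\in\mathbb{R}^{n+1}\times\{1\}$ and then invert the central projection. Because $\alpha_N$ radially projects the upper hemisphere onto the affine plane $\mathbb{R}^{n+1}\times\{1\}$, its inverse is the normalization $\alpha_N^{-1}(y,1)=\frac{1}{\sqrt{|y|^2+1}}(y,1)$; one should check that this image indeed lies on $S^{n+1}$ and in $S_{N,+}^{n+1}$. For $y=r\theta$ this produces the point $\widetilde{P}=\frac{1}{\sqrt{r^2+1}}(r\theta,1)$, together with the decisive scalar $N\cdot\widetilde{P}=\frac{1}{\sqrt{r^2+1}}$.

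The heart of the argument is the $\Psi_N$ step. Using $(N\cdot\widetilde{P})^2=\frac{1}{r^2+1}$, so that $1-(N\cdot\widetilde{P})^2=\frac{r^2}{r^2+1}$ and hence $\sqrt{1-(N\cdot\widetilde{P})^2}=\frac{r}{\sqrt{r^2+1}}$ (here $r>0$ fixes the sign), I would compute the orthogonal component $N-(N\cdot\widetilde{P})\widetilde{P}=\frac{r}{r^2+1}(-\theta,r)$ and observe that the normalization factors cancel to give $\Psi_N(\widetilde{P})=\frac{1}{\sqrt{r^2+1}}(-\theta,r)$. This is the step where both features of the inversion appear: the reflection of direction $\theta\mapsto-\theta$ is produced automatically by subtracting the $\widetilde{P}$-component of $N$, i.e.\ by the very orthogonalization that yields the spherical distance $\pi/2$ recorded before the statement. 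Finally, applying $\alpha_N$ divides the first $n+1$ coordinates by the last, giving $\alpha_N(\Psi_N(\widetilde{P}))=(-\frac{1}{r}\theta,1)$, and $\mathrm{Id}^{-1}$ strips the trailing entry to leave $\frac{1}{r}(-\theta)$, exactly as required.

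The only genuine obstacle is bookkeeping rather than conceptual: correctly identifying $\alpha_N^{-1}$ with the right normalization so that the image lands on $S^{n+1}$, and carrying the square-root factors through the $\Psi_N$ formula so that they collapse cleanly to $\frac{1}{\sqrt{r^2+1}}$. Along the way I would verify that every map is applied within its stated domain, namely $\widetilde{P}\neq\pm N$ and $\Psi_N(\widetilde{P})\in S_{N,+}^{n+1}$, both of which hold because $r>0$ keeps the final coordinate strictly positive. No deeper input is needed: the proposition is a verification that the spherical blow-up, conjugated by the central projection and the lift $\mathrm{Id}$, reproduces inversion through the origin.
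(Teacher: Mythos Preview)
Your computation is correct and complete: each step is valid, the domain checks ($\widetilde{P}\ne\pm N$ and $\Psi_N(\widetilde{P})\in S_{N,+}^{n+1}$ because $r>0$) are properly noted, and the square-root bookkeeping is handled cleanly. The paper itself does not prove this proposition; it is quoted from \cite{hnams} without argument, so your direct verification supplies what the paper omits.
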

\par 
(4) 
For any point $\widetilde{P} \in S^{n+1}$, let $H(\widetilde{P})$
be the hemisphere centered at $\widetilde{P}$,
\[
H(\widetilde{P})=\{\widetilde{Q}\in S^{n+1}\mid \widetilde{P}\cdot \widetilde{Q}\geq 0\},
\]
where the dot in the center stands for the scalar product of two vectors 
$\widetilde{P}, \widetilde{Q}\in \mathbb{R}^{n+2}$.     
For any {\color{black}non-empty} subset $\widetilde{W}\subset S^{n+1}$, the {\it spherical polar set of $\widetilde{W}$}, denoted by 
$\widetilde{W}^\circ$, is defined as follows: 
\[
\widetilde{W}^\circ = \bigcap_{\widetilde{P}\in \widetilde{W}}H(\widetilde{P}).
\]   
\begin{proposition}[\cite{nishimurasakemi2}]\label{sphericalmethod}
Let $\gamma: S^{n}\to \mathbb{R}_+$ be a continuous function.    
Set 
\[
\mbox{\rm graph}(\gamma)=\{(\theta, \gamma(\theta))\in \mathbb{R}^{n+1}-\{0\}\; |\; \theta\in S^n\},
\]
where 
$(\theta, \gamma(\theta))$ is the polar plot expression for a point of $\mathbb{R}^{n+1}-\{0\}$.   
Then, $\mathcal{W}_\gamma$ {\color{black}is} characterized as follows:   
\[
\mathcal{W}_\gamma = 
Id^{-1}\circ \alpha_{{}_{N}}\left(\left(\Psi_N\circ \alpha_{{}_{N}}^{-1}\circ 
Id\left(\mbox{\rm graph}(\gamma)\right)\right)^\circ\right).
\]
\end{proposition}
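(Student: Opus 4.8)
\emph{Proof proposal for Proposition \ref{sphericalmethod}.} The plan is to carry the entire defining intersection $\mathcal{W}_\gamma=\bigcap_{\theta\in S^n}\Gamma_{\gamma,\theta}$ onto the sphere $S^{n+1}$ through the bijection $\Phi:=\alpha_N^{-1}\circ Id\colon\mathbb{R}^{n+1}\to S_{N,+}^{n+1}$, for which a direct computation gives $\Phi(x)=\tfrac{1}{\sqrt{1+|x|^2}}(x,1)$. Since $\Phi$ is injective, it commutes with arbitrary intersections, so $\Phi(\mathcal{W}_\gamma)=\bigcap_{\theta\in S^n}\Phi(\Gamma_{\gamma,\theta})$. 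Thus everything reduces to: (i) computing the image of a single half space $\Gamma_{\gamma,\theta}$ and recognising it as a hemisphere intersected with $S_{N,+}^{n+1}$; (ii) matching the resulting vectors with $\Psi_N\circ\alpha_N^{-1}\circ Id(\mathrm{graph}(\gamma))$; and (iii) checking that the resulting spherical polar set already lies in $S_{N,+}^{n+1}$, so that it can be pulled back by $\Phi^{-1}=Id^{-1}\circ\alpha_N$.

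For (i), observe that for $x\in\mathbb{R}^{n+1}$ the inequality $x\cdot\theta\le\gamma(\theta)$ is equivalent to $(x,1)\cdot(-\theta,\gamma(\theta))\ge 0$, hence, after normalising both vectors, to $\Phi(x)\cdot\widetilde v_\theta\ge 0$, where $\widetilde v_\theta:=\tfrac{1}{\sqrt{1+\gamma(\theta)^2}}(-\theta,\gamma(\theta))\in S^{n+1}$. Because $\Phi$ maps $\mathbb{R}^{n+1}$ \emph{onto} $S_{N,+}^{n+1}$, this gives $\Phi(\Gamma_{\gamma,\theta})=S_{N,+}^{n+1}\cap H(\widetilde v_\theta)$, and therefore $\Phi(\mathcal{W}_\gamma)=S_{N,+}^{n+1}\cap\widetilde V^{\circ}$, where $\widetilde V:=\{\widetilde v_\theta\mid\theta\in S^n\}$.

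For (ii), the quickest route is through Proposition \ref{psi}: the polar plot point $(\theta,\gamma(\theta))$ is $\gamma(\theta)\theta$ in Cartesian coordinates, and $\mathrm{inv}(\theta,\gamma(\theta))=\tfrac{1}{\gamma(\theta)}(-\theta)$, so a one line computation yields $\Phi(\mathrm{inv}(\theta,\gamma(\theta)))=\widetilde v_\theta$; combining this with the identity $\mathrm{inv}=\Phi^{-1}\circ\Psi_N\circ\Phi$ coming from Proposition \ref{psi} gives $\widetilde v_\theta=\Psi_N\circ\alpha_N^{-1}\circ Id(\gamma(\theta)\theta)$, hence $\widetilde V=\Psi_N\circ\alpha_N^{-1}\circ Id(\mathrm{graph}(\gamma))$. (Alternatively one may simply substitute $\alpha_N^{-1}\circ Id(\gamma(\theta)\theta)=\tfrac{1}{\sqrt{1+\gamma(\theta)^2}}(\gamma(\theta)\theta,1)$ into the defining formula for $\Psi_N$ and simplify, which is essentially the original argument.)

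For (iii), I would check that $\widetilde V^{\circ}\subseteq S_{N,+}^{n+1}$. Writing $\widetilde Q\in\widetilde V^{\circ}$ as $(\widetilde Q',q)$ with $\widetilde Q'\in\mathbb{R}^{n+1}$ and $q=\widetilde Q\cdot N$, membership means $\gamma(\theta)\,q\ge\theta\cdot\widetilde Q'$ for every $\theta\in S^n$; taking $\theta=\widetilde Q'/|\widetilde Q'|$ when $\widetilde Q'\neq 0$ (and noting that $\widetilde Q'=0$ forces $\widetilde Q=N$) forces $q>0$, so $\widetilde Q\in S_{N,+}^{n+1}$. Hence $\Phi(\mathcal{W}_\gamma)=\widetilde V^{\circ}$, and applying $\Phi^{-1}=Id^{-1}\circ\alpha_N$ yields $\mathcal{W}_\gamma=Id^{-1}\circ\alpha_N\bigl(\bigl(\Psi_N\circ\alpha_N^{-1}\circ Id(\mathrm{graph}(\gamma))\bigr)^{\circ}\bigr)$. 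The individual computations are routine; the step that genuinely needs attention is this last containment, since it is exactly what makes the right-hand side equal to $\mathcal{W}_\gamma$ rather than only to its image in $S^{n+1}$ after an extra intersection with $S_{N,+}^{n+1}$.
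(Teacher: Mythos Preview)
The paper does not supply its own proof of Proposition~\ref{sphericalmethod}: it is quoted verbatim as a result of Nishimura and Sakemi \cite{nishimurasakemi2}, so there is nothing in the present paper to compare your argument against. That said, your proposal is a correct and self-contained proof. The three steps are exactly the right decomposition: step~(i) correctly identifies $\Phi(\Gamma_{\gamma,\theta})=S_{N,+}^{n+1}\cap H(\widetilde v_\theta)$; step~(ii) is verified by the explicit computation you sketch (and the shortcut via Proposition~\ref{psi} is legitimate, since $\Phi\circ\mathrm{inv}=\Psi_N\circ\Phi$ on $\mathbb{R}^{n+1}\setminus\{0\}$); and step~(iii) is the one genuinely nontrivial point, which you handle cleanly by testing $\theta=\widetilde Q'/|\widetilde Q'|$ and using $\gamma>0$. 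Your closing remark is apt: without (iii) one would only get $\Phi(\mathcal{W}_\gamma)=S_{N,+}^{n+1}\cap\widetilde V^{\circ}$, and it is precisely the positivity of $\gamma$ that forces $\widetilde V^{\circ}\subset S_{N,+}^{n+1}$ and hence the clean equality.
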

\subsection{Spherical polar sets} 
\begin{lemma}[\cite{nishimurasakemi2}]\label{lemmainclusion}
Let $\widetilde{X}, \widetilde{Y}$ be subsets of $S^{n+1}$. Suppose that the inclusion 
$\widetilde{X}\subset \widetilde{Y}$ holds.
Then, the inclusion $\widetilde{Y}^\circ \subset \widetilde{X}^\circ$ holds.
\end{lemma}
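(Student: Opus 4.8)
The plan is to unwind the definition of the spherical polar set and then invoke the elementary monotonicity of set intersection, which reverses inclusions. Recall that for a non-empty subset $\widetilde{W}\subset S^{n+1}$ we have $\widetilde{W}^\circ = \bigcap_{\widetilde{P}\in\widetilde{W}}H(\widetilde{P})$, where each $H(\widetilde{P})$ is the closed hemisphere $\{\widetilde{Q}\in S^{n+1}\mid \widetilde{P}\cdot\widetilde{Q}\geq 0\}$. Since the polar set is an intersection indexed by the points of $\widetilde{W}$, enlarging the index set can only shrink the intersection, and this is the whole content of the lemma.

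First I would fix an arbitrary point $\widetilde{Q}\in\widetilde{Y}^\circ$ and translate membership into a pointwise inequality: by definition $\widetilde{Q}\in H(\widetilde{P})$ for every $\widetilde{P}\in\widetilde{Y}$, that is, $\widetilde{P}\cdot\widetilde{Q}\geq 0$ holds for all $\widetilde{P}\in\widetilde{Y}$. Next I would use the hypothesis $\widetilde{X}\subset\widetilde{Y}$: since every point of $\widetilde{X}$ is also a point of $\widetilde{Y}$, the inequality $\widetilde{P}\cdot\widetilde{Q}\geq 0$ persists for all $\widetilde{P}\in\widetilde{X}$. This says precisely that $\widetilde{Q}\in H(\widetilde{P})$ for every $\widetilde{P}\in\widetilde{X}$, hence $\widetilde{Q}\in\bigcap_{\widetilde{P}\in\widetilde{X}}H(\widetilde{P})=\widetilde{X}^\circ$. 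As $\widetilde{Q}$ was arbitrary, this establishes $\widetilde{Y}^\circ\subset\widetilde{X}^\circ$.

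There is no genuine obstacle here: the statement is the spherical analogue of the classical fact that polar duality is inclusion-reversing, and it follows directly from the defining intersection without any appeal to compactness, convexity, or any property of the hemispheres beyond their definition. The only point worth a word of care is the convention in degenerate cases: if $\widetilde{X}$ is empty, the intersection defining $\widetilde{X}^\circ$ is understood to be all of $S^{n+1}$, so the inclusion holds trivially, while if $\widetilde{X}$ is non-empty the element-chase above applies verbatim.
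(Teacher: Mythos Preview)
Your proof is correct: the element-chase via the defining intersection $\widetilde{W}^\circ=\bigcap_{\widetilde{P}\in\widetilde{W}}H(\widetilde{P})$ is exactly the right argument, and no further ingredients are needed. The paper itself does not supply a proof of this lemma---it is quoted from \cite{nishimurasakemi2} without argument---so there is nothing to compare against; your write-up would serve perfectly well as the omitted justification.
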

A subset $\widetilde{X}\subset S^{n+1}$ is said to be {\it hmispherical} if there exists a
point $\widetilde{P}$ of  $S^{n+1}$ such that $H(\widetilde{P})\cap \widetilde{X}=\emptyset$.
Let $\widetilde{C}$ be a hemispherical subset of $S^{n+1}$. 
The {\it spherical convex hull of} $\widetilde{C}$, denoted by $\mbox{s-conv}(\widetilde{C})$, is defined by
\[
\mbox{s-conv}(\widetilde{C}) =\left\{
\frac{\sum_{i=1}^k t_i\widetilde{P}_i}{\mid\mid\sum_{i=1}^k t_i\widetilde{P}_k\mid\mid} |
\sum_{i=1}^kt_i=1, t_i\geq 0, \widetilde{P}_1,\dots, \widetilde{P}_k\in \widetilde{C}, k\in \mathbb{N}
\right\}.
\]
\begin{proposition}[\cite{nishimurasakemi2}]\label{dualconvex}
For any closed hemispherical subset $\widetilde{X}\subset S^{n+1}$, 
the following equality holds:
\[
\mbox{{\rm s-conv}}(\widetilde{X})=(\mbox{{\rm s-conv}}(\widetilde{X}))^{\circ\circ}. 
\]
\end{proposition}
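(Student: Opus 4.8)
The plan is to prove the two inclusions separately, with the nontrivial one reduced to the classical bipolar theorem for convex cones in $\mathbb{R}^{n+2}$.

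First I would dispose of the inclusion $\text{s-conv}(\widetilde X)\subseteq(\text{s-conv}(\widetilde X))^{\circ\circ}$, which in fact holds for \emph{every} subset $\widetilde W\subseteq S^{n+1}$: if $\widetilde Q\in\widetilde W$ and $\widetilde R\in\widetilde W^\circ$, then by definition of $\widetilde W^\circ$ one has $\widetilde R\cdot\widetilde Q\geq 0$, and since the dot product is symmetric this says exactly $\widetilde Q\in H(\widetilde R)$; as $\widetilde R\in\widetilde W^\circ$ was arbitrary, $\widetilde Q\in\bigcap_{\widetilde R\in\widetilde W^\circ}H(\widetilde R)=\widetilde W^{\circ\circ}$. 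Applying this with $\widetilde W=\text{s-conv}(\widetilde X)$ gives one inclusion for free, so the whole content of the Proposition is the reverse inclusion $(\text{s-conv}(\widetilde X))^{\circ\circ}\subseteq\text{s-conv}(\widetilde X)$.

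For the reverse inclusion I would pass to the cone picture in $\mathbb{R}^{n+2}$. Set $K=\{\sum_{i=1}^k t_i\widetilde P_i\mid t_i\geq 0,\ \widetilde P_i\in\widetilde X,\ k\in\mathbb{N}\}$, the convex cone generated by $\widetilde X$, and for a cone $L$ write $L^*=\{y\in\mathbb{R}^{n+2}\mid y\cdot x\geq 0\text{ for all }x\in L\}$ for its dual cone. The key is a short dictionary: (i) because normalization is invariant under positive scaling, the constraint $\sum t_i=1$ in the definition of $\text{s-conv}$ is immaterial and $\text{s-conv}(\widetilde X)=\{v/\|v\|\mid v\in K\setminus\{\mathbf 0\}\}=K\cap S^{n+1}$; and (ii) for any $\widetilde W\subseteq S^{n+1}$ one has $\widetilde W^\circ=(\text{cone}(\widetilde W))^*\cap S^{n+1}$, since requiring $\widetilde R\cdot\widetilde P\geq 0$ for all generators $\widetilde P\in\widetilde W$ is the same as requiring it for all points of the cone they generate. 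Combining (i) and (ii) and using that $\text{cone}(K\cap S^{n+1})=K$ gives $(\text{s-conv}(\widetilde X))^\circ=K^*\cap S^{n+1}$ and then $(\text{s-conv}(\widetilde X))^{\circ\circ}=K^{**}\cap S^{n+1}$.

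At this point the statement reduces to the Euclidean identity $K^{**}=K$. The main obstacle — and the only place the hypotheses on $\widetilde X$ are really used — is verifying that $K$ is a \emph{closed} convex cone, since the bipolar theorem $K^{**}=K$ holds precisely for closed convex cones. Here I would use that $\widetilde X$ is a closed, hence compact, subset of $S^{n+1}$ and that it is hemispherical: the hemisphericity furnishes a point $\widetilde P_0$ with $\widetilde P_0\cdot\widetilde x<0$ for all $\widetilde x\in\widetilde X$, and compactness upgrades this to $\widetilde P_0\cdot\widetilde x\leq-\varepsilon<0$ uniformly, so $\mathbf 0\notin\text{conv}(\widetilde X)$ and $K$ is pointed. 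The standard fact that the cone generated by a compact set whose convex hull misses the origin is closed then shows $K$ is a pointed closed convex cone. Invoking $K^{**}=K$ (equivalently, a separating-hyperplane argument: any $\widetilde Q\notin K\cap S^{n+1}$ lies outside the closed convex cone $K$, so there is $\widetilde R$ with $\widetilde R\cdot x\geq 0$ on $K$ and $\widetilde R\cdot\widetilde Q<0$, whence $\widetilde R\in(\text{s-conv}(\widetilde X))^\circ$ witnesses $\widetilde Q\notin(\text{s-conv}(\widetilde X))^{\circ\circ}$) completes the proof. I expect the bookkeeping in the dictionary (i)--(ii) to be routine, and the closedness of $K$ to be the one step demanding genuine care.
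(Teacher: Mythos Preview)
Your argument is correct: the reduction to the Euclidean bipolar theorem $K^{**}=K$ for closed convex cones is the natural way to establish this spherical bipolar identity, and you have identified exactly where the hypotheses ``closed'' and ``hemispherical'' enter (they guarantee that the cone $K$ generated by $\widetilde X$ is closed and pointed, so that the bipolar theorem applies). The dictionary items (i) and (ii) are routine, as you anticipate.

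There is, however, nothing to compare against here: in this paper Proposition~\ref{dualconvex} is quoted without proof from \cite{nishimurasakemi2} and used as a black box. The paper supplies no argument of its own for this statement.
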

For any $\widetilde{P}, \widetilde{Q}\in S^{n+1} (\widetilde{P}\neq -\widetilde{Q})$, the arc connecting $\widetilde{P}$ and $\widetilde{Q}$, denoted by $\widetilde{P}\widetilde{Q}$,  is defined by
the shorter part of the great circle containing these points, namely,
\[
\widetilde{P}\widetilde{Q}=\left\{
\frac{(1-t)\widetilde{P}+t\widetilde{Q}}{||(1-t)\widetilde{P}+t\widetilde{Q}||}\in S^{n+1} \mid t\in [0,1]
\right\}.
\]
 The spherical distance $|\widetilde{P}\widetilde{Q}|$ of $\widetilde{P}$ and $\widetilde{Q}$ can be given by
$\arccos(\widetilde{P}\cdot \widetilde{Q})$.
A hemispherical subset $\widetilde{X}\subset S^{n+1}$ is said to be {\it spherical convex}
if the arc $\widetilde{P}\widetilde{Q}$ is a subset of $\widetilde{X}$ for any $\widetilde{P}, \widetilde{Q}\in \widetilde{X}$. 
A spherical convex set $\widetilde{C}$ is said to be {\it spherical convex body}
if $\widetilde{C}$ contains an interior point.
\begin{proposition}[\cite{hnsm}]\label{dualisometric}
Let $\widetilde{P}\in S^{n+1}$ and let $\widetilde{W}_1, \widetilde{W}_2$ be spherical convex bodies contained in $H(\widetilde{P})$. 
Suppose that $\widetilde{P}$ is an interior point of $\widetilde{W}_1, \widetilde{W}_2$.
Then the following equality holds:
\[
h(\widetilde{W}_1, \widetilde{W}_2)=h(\widetilde{W}_1^\circ, \widetilde{W}_2^\circ),
\]
where $h$ is the Hausdorff distance.
\end{proposition}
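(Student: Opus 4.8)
The plan is to show that spherical polar duality is an isometry, for the Hausdorff distance, on the class $\mathcal{C}_{\widetilde P}$ of spherical convex bodies that have $\widetilde P$ in their interior and are contained in $H(\widetilde P)$, deducing this from a one-sided Lipschitz estimate together with the involutivity $\widetilde W^{\circ\circ}=\widetilde W$. First I would observe that $\mathcal{C}_{\widetilde P}$ is stable under the polar operation $\widetilde W\mapsto\widetilde W^\circ$ and that $\widetilde W^{\circ\circ}=\widetilde W$ for every $\widetilde W\in\mathcal{C}_{\widetilde P}$, the latter being Proposition~\ref{dualconvex} applied to $\widetilde W=\mbox{s-conv}(\widetilde W)$. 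Granting this, it suffices to prove the single inequality
\[
h(\widetilde W_1^\circ,\widetilde W_2^\circ)\le h(\widetilde W_1,\widetilde W_2),
\]
since applying it to the pair $\widetilde W_1^\circ,\widetilde W_2^\circ$ (again members of $\mathcal{C}_{\widetilde P}$) and using $\widetilde W_i^{\circ\circ}=\widetilde W_i$ yields the reverse inequality, hence equality.

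To prove the displayed inequality I would set $\varepsilon=h(\widetilde W_1,\widetilde W_2)$, so that every point of $\widetilde W_1$ is within spherical distance $\varepsilon$ of $\widetilde W_2$ and conversely, and reduce to showing that every $\widetilde R\in\widetilde W_2^\circ$ lies within spherical distance $\varepsilon$ of $\widetilde W_1^\circ$ (the other half of the Hausdorff distance being symmetric). Given such an $\widetilde R$: for each $\widetilde Q\in\widetilde W_1$ pick $\widetilde Q'\in\widetilde W_2$ with $|\widetilde Q\widetilde Q'|\le\varepsilon$; since $\widetilde R\in\widetilde W_2^\circ$ forces $\widetilde W_2\subseteq H(\widetilde R)$, i.e.\ $|\widetilde R\widetilde Q'|\le\frac{\pi}{2}$, the triangle inequality gives $|\widetilde R\widetilde Q|\le\frac{\pi}{2}+\varepsilon$. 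Taking the supremum over $\widetilde Q\in\widetilde W_1$, we get $\sup_{\widetilde Q\in\widetilde W_1}|\widetilde R\widetilde Q|\le\frac{\pi}{2}+\varepsilon$, so the whole estimate comes down to the following key lemma: \emph{if $\widetilde W\in\mathcal{C}_{\widetilde P}$ and $x\in S^{n+1}$ satisfy $\sup_{\widetilde Q\in\widetilde W}|x\widetilde Q|\le\frac{\pi}{2}+\varepsilon$, then the spherical distance from $x$ to $\widetilde W^\circ$ is at most $\varepsilon$.}

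I would prove the key lemma by contradiction, using spherical separation. Let $\rho$ be the spherical distance from $x$ to $\widetilde W^\circ$ and suppose $\rho>\varepsilon$; we may assume $\varepsilon<\frac{\pi}{2}$ (the boundary case $\varepsilon=\frac{\pi}{2}$ being treated separately), and choose $\varepsilon'$ with $\varepsilon<\varepsilon'<\min\{\rho,\frac{\pi}{2}\}$. Then $\overline{B}(x,\varepsilon')$ and $\widetilde W^\circ$ are disjoint, closed, spherically convex, and hemispherical — the ball because $\varepsilon'<\frac{\pi}{2}$, and $\widetilde W^\circ$ because $B(\widetilde P,r)\subseteq\widetilde W$ for some $r>0$ forces $\widetilde W^\circ\subseteq B(\widetilde P,\frac{\pi}{2}-r)$, which lies in the open hemisphere about $\widetilde P$. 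By the spherical separation theorem there is $\widetilde S\in S^{n+1}$ with $\overline{B}(x,\varepsilon')\subseteq H(\widetilde S)$ and $\widetilde W^\circ\subseteq H(-\widetilde S)$. The second inclusion means $-\widetilde S\in\bigcap_{\widetilde R\in\widetilde W^\circ}H(\widetilde R)=(\widetilde W^\circ)^\circ=\widetilde W^{\circ\circ}=\widetilde W$ by Proposition~\ref{dualconvex}; put $\widetilde Q_0=-\widetilde S$, so $\widetilde Q_0\in\widetilde W$ and $H(\widetilde S)=\{z\in S^{n+1}:|z\widetilde Q_0|\ge\frac{\pi}{2}\}$. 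In particular $x\in\overline{B}(x,\varepsilon')\subseteq H(\widetilde S)$ gives $|x\widetilde Q_0|\ge\frac{\pi}{2}>\varepsilon'$, so the point $y$ obtained by travelling spherical distance $\varepsilon'$ from $x$ along a minimizing geodesic toward $\widetilde Q_0$ satisfies $y\in\overline{B}(x,\varepsilon')$ and $|y\widetilde Q_0|=|x\widetilde Q_0|-\varepsilon'$. Since $y\in H(\widetilde S)$ we get $|x\widetilde Q_0|-\varepsilon'\ge\frac{\pi}{2}$, hence $|x\widetilde Q_0|\ge\frac{\pi}{2}+\varepsilon'>\frac{\pi}{2}+\varepsilon$, which contradicts $\sup_{\widetilde Q\in\widetilde W}|x\widetilde Q|\le\frac{\pi}{2}+\varepsilon$ because $\widetilde Q_0\in\widetilde W$. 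This proves the key lemma, and hence the proposition.

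The step I expect to be the main obstacle is the spherical separation invoked in the key lemma: that two disjoint, closed, spherically convex, hemispherical subsets of $S^{n+1}$ can be weakly separated by a great hypersphere — equivalently, that two closed convex cones in $\mathbb{R}^{n+2}$ meeting only at the origin admit a separating linear hyperplane — which I would derive from the Euclidean separation theorem applied to the generating cones, taking care with the non-pointed cases. The other place that needs attention is the opening reduction, i.e.\ the stability of $\mathcal{C}_{\widetilde P}$ under polarity and the identity $\widetilde W^{\circ\circ}=\widetilde W$ on it; this uses Proposition~\ref{dualconvex} together with the hemisphericality of the bodies involved, the inclusion $\widetilde W^\circ\subseteq B(\widetilde P,\frac{\pi}{2}-r)$ being what confines all the polars to a fixed open hemisphere. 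If one prefers to sidestep any non-degeneracy subtleties in that reduction, the inequality $h(\widetilde W_1^\circ,\widetilde W_2^\circ)\ge h(\widetilde W_1,\widetilde W_2)$ can instead be established directly: realize $h(\widetilde W_1,\widetilde W_2)$ at a point $\widetilde Q^{*}$ of one body at distance $\varepsilon$ from the other, separate a small ball about $\widetilde Q^{*}$ from that other body, and run the same geodesic computation to exhibit a point of one polar far from the other polar. The reduction to the key lemma, by contrast, uses nothing beyond the triangle inequality.
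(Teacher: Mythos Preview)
The paper does not supply a proof of this proposition; it is quoted from \cite{hnsm} and used as a black box in the proof of Theorem~\ref{theorem2}. There is therefore no in-paper argument to compare your attempt against.

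Your proof is correct. The strategy---prove the one-sided estimate $h(\widetilde W_1^\circ,\widetilde W_2^\circ)\le h(\widetilde W_1,\widetilde W_2)$ via your key lemma and then invoke the bipolar identity of Proposition~\ref{dualconvex} to get the reverse inequality---is the natural one, and the geodesic computation inside the key lemma is clean. One cosmetic point: the class $\mathcal C_{\widetilde P}$ is not literally closed under $\widetilde W\mapsto\widetilde W^\circ$, because the hypothesis $\widetilde W\subset H(\widetilde P)$ allows $\widetilde W$ to touch $\partial H(\widetilde P)$, and then $\widetilde P$ lies only on $\partial\widetilde W^\circ$. This does no harm, since all your reduction actually uses is that each $\widetilde W_i^\circ$ is closed, spherically convex, and hemispherical (which follows from $\widetilde P\in\mbox{int}\,\widetilde W_i$, giving $\widetilde W_i^\circ\subset\overline B(\widetilde P,\tfrac{\pi}{2}-r)$) together with $\widetilde W_i^{\circ\circ}=\widetilde W_i$; simply state those properties directly instead of asserting stability of $\mathcal C_{\widetilde P}$. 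As for the separation step you flag as the main obstacle: with the paper's definition of ``hemispherical'' (contained in some \emph{open} hemisphere), the cones in $\mathbb R^{n+2}$ generated by $\overline B(x,\varepsilon')$ and by $\widetilde W^\circ$ are both pointed, and two pointed closed convex cones meeting only at the origin have disjoint relative interiors, so the standard proper-separation theorem yields a separating hyperplane through the origin, i.e.\ the desired great hypersphere. With these minor rewordings the argument is complete.
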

\subsection{Width of spherical convex bodies}
Denote by $\partial \widetilde{C}$ the boundary of spherical convex body $\widetilde{C}$.
Let $\widetilde{P}$ be a point of $\partial\widetilde{C}$. 
If 
\[
\widetilde{C}\subset H(\widetilde{Q})\ \mbox{and}\ \widetilde{P}\in \widetilde{C}\cap \partial H(\widetilde{Q}),
\] then we say hemisphere $H(\widetilde{Q})$ {\it supports} $\widetilde{C}$ at $\widetilde{P}$, and $H(\widetilde{Q})$ is a {\it supporting hemisphere} of $\widetilde{C}$ at $\widetilde{P}$. 
\begin{lemma}[\cite{hwam}]\label{boundarysupport}
Let $\widetilde{C}$ be a spherical convex body in $S^{n+1}$. 
For any point $\widetilde{P}$ of the boundary of $\widetilde{C}^\circ$,
the center of the supporting hemisphere of $\widetilde{C}^\circ$ at $\widetilde{P}$ is 
a boundary point of $\widetilde{C}$.
\end{lemma}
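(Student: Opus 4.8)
The plan is to reduce everything to the biduality $\widetilde{C}=\widetilde{C}^{\circ\circ}$ supplied by Proposition \ref{dualconvex}, and then to rule out the interior case by a short geodesic argument. Write $H(\widetilde{Q})$ for the supporting hemisphere of $\widetilde{C}^\circ$ at $\widetilde{P}$, so that by definition $\widetilde{C}^\circ\subset H(\widetilde{Q})$ and $\widetilde{P}\in\widetilde{C}^\circ\cap\partial H(\widetilde{Q})$; the second condition means precisely $\widetilde{Q}\cdot\widetilde{P}=0$. My first goal is to show $\widetilde{Q}\in\widetilde{C}$, and my second goal is to show $\widetilde{Q}\notin\mathrm{int}\,\widetilde{C}$, which together give $\widetilde{Q}\in\partial\widetilde{C}$.

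First I would unwind the inclusion $\widetilde{C}^\circ\subset H(\widetilde{Q})$. By the definition of $H$, this says $\widetilde{Q}\cdot\widetilde{R}\geq 0$ for every $\widetilde{R}\in\widetilde{C}^\circ$, which is exactly the statement $\widetilde{Q}\in\bigcap_{\widetilde{R}\in\widetilde{C}^\circ}H(\widetilde{R})=(\widetilde{C}^\circ)^\circ=\widetilde{C}^{\circ\circ}$. Since $\widetilde{C}$ is a closed, hemispherical spherical convex body, $\mbox{s-conv}(\widetilde{C})=\widetilde{C}$, so Proposition \ref{dualconvex} gives $\widetilde{C}^{\circ\circ}=\widetilde{C}$. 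Hence $\widetilde{Q}\in\widetilde{C}$, establishing the first goal.

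It remains to show $\widetilde{Q}$ lies on $\partial\widetilde{C}$ rather than in the interior, and here I would use the other piece of information, namely $\widetilde{P}\in\widetilde{C}^\circ$, which by the definition of the polar set means $\widetilde{R}\cdot\widetilde{P}\geq 0$ for all $\widetilde{R}\in\widetilde{C}$. Suppose for contradiction that $\widetilde{Q}\in\mathrm{int}\,\widetilde{C}$. Because $\widetilde{Q}\cdot\widetilde{P}=0$, the vector $\widetilde{P}$ is tangent to $S^{n+1}$ at $\widetilde{Q}$, so the curve $\widetilde{R}(t)=\cos t\,\widetilde{Q}-\sin t\,\widetilde{P}$ is a unit-speed geodesic on the sphere starting at $\widetilde{Q}$, and it satisfies $\widetilde{R}(t)\cdot\widetilde{P}=-\sin t<0$ for small $t>0$. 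As $\widetilde{Q}$ is interior, $\widetilde{R}(t)\in\widetilde{C}$ for all sufficiently small $t>0$, contradicting $\widetilde{R}\cdot\widetilde{P}\geq 0$ for points of $\widetilde{C}$. Therefore $\widetilde{Q}\in\partial\widetilde{C}$.

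The argument is essentially forced once the supporting-hemisphere condition is split into the two relations $\widetilde{C}^\circ\subset H(\widetilde{Q})$ and $\widetilde{Q}\cdot\widetilde{P}=0$, so I do not expect a single hard obstacle. The one place demanding care is the equivalence $\widetilde{C}^\circ\subset H(\widetilde{Q})\Leftrightarrow\widetilde{Q}\in\widetilde{C}^{\circ\circ}$, where one must verify that the polar operation is symmetric in exactly the way used and that $\widetilde{C}$ is closed and hemispherical so that Proposition \ref{dualconvex} legitimately applies. The boundary step is then an elementary first-order perturbation, but it is worth checking that $\widetilde{R}(t)$ genuinely remains on $S^{n+1}$ and enters the open set $\mathrm{int}\,\widetilde{C}$, which is precisely where the orthogonality $\widetilde{Q}\cdot\widetilde{P}=0$ and the openness of the interior are used.
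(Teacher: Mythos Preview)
The paper does not give its own proof of this lemma; it is quoted from \cite{hwam} and stated without argument, so there is nothing in the present paper to compare your proposal against.

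On its own merits your argument is correct. Splitting the supporting condition into $\widetilde{C}^\circ\subset H(\widetilde{Q})$ and $\widetilde{Q}\cdot\widetilde{P}=0$ is exactly right: the first inclusion is equivalent to $\widetilde{Q}\in\widetilde{C}^{\circ\circ}$, and since a spherical convex body is closed and hemispherical, Proposition~\ref{dualconvex} gives $\widetilde{C}^{\circ\circ}=\widetilde{C}$, hence $\widetilde{Q}\in\widetilde{C}$. The boundary step is also sound: orthogonality of $\widetilde{P}$ and $\widetilde{Q}$ guarantees that $\widetilde{R}(t)=\cos t\,\widetilde{Q}-\sin t\,\widetilde{P}$ is a genuine unit-speed geodesic on $S^{n+1}$, and $\widetilde{R}(t)\cdot\widetilde{P}=-\sin t<0$ for small $t>0$ contradicts $\widetilde{C}\subset H(\widetilde{P})$ whenever $\widetilde{Q}$ is interior. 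Nothing is missing.
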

\begin{lemma}\label{lemmadualboundary}
Let $\widetilde{C}$ be a spherical convex body in $S^{n+1}$. 
Let $\widetilde{P}$ be a boundary point of $\widetilde{C}$. 
Then $H(\widetilde{P})$ is a supporting hemisphere of $\widetilde{C}^\circ$.
\end{lemma}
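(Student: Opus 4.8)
The plan is to produce, quite explicitly, a point of $\widetilde{C}^\circ$ that lies on the boundary great sphere $\partial H(\widetilde{P})$, and to observe that $\widetilde{C}^\circ$ is contained in $H(\widetilde{P})$; these two facts are exactly what ``$H(\widetilde{P})$ is a supporting hemisphere of $\widetilde{C}^\circ$'' means. The containment is immediate from the definition of the spherical polar set: since $\widetilde{P}\in\widetilde{C}$, the intersection $\widetilde{C}^\circ=\bigcap_{\widetilde{R}\in\widetilde{C}}H(\widetilde{R})$ is a subset of the single hemisphere $H(\widetilde{P})$. So the whole content of the lemma is to exhibit a point in $\widetilde{C}^\circ\cap\partial H(\widetilde{P})$.

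For that, first I would use that $\widetilde{P}$ is a boundary point of the spherical convex body $\widetilde{C}$ to get a supporting hemisphere $H(\widetilde{Q})$ of $\widetilde{C}$ at $\widetilde{P}$, i.e. $\widetilde{C}\subset H(\widetilde{Q})$ and $\widetilde{P}\cdot\widetilde{Q}=0$. (This is the spherical supporting-hyperplane fact: pass to the convex cone over $\widetilde{C}$ in $\mathbb{R}^{n+2}$, take a supporting hyperplane through the origin, and let $\widetilde{Q}$ be the inner normal direction; one may also cite the companion statement, Lemma \ref{boundarysupport}, from which this is the natural converse.) Next, apply Lemma \ref{lemmainclusion} to the inclusion $\widetilde{C}\subset H(\widetilde{Q})$ to obtain $H(\widetilde{Q})^\circ\subset\widetilde{C}^\circ$. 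A short direct computation then shows $H(\widetilde{Q})^\circ=\{\widetilde{Q}\}$: if $\widetilde{Z}\in H(\widetilde{Q})^\circ$, then for any unit vector $\widetilde{W}\perp\widetilde{Q}$ both $\widetilde{W}$ and $-\widetilde{W}$ lie in $H(\widetilde{Q})$, forcing $\widetilde{Z}\cdot\widetilde{W}=0$; hence $\widetilde{Z}=\pm\widetilde{Q}$, and testing against $\widetilde{Q}\in H(\widetilde{Q})$ rules out $-\widetilde{Q}$. Therefore $\widetilde{Q}\in\widetilde{C}^\circ$.

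Finally, since $\widetilde{P}\cdot\widetilde{Q}=0$ we have $\widetilde{Q}\in\partial H(\widetilde{P})$, so $\widetilde{Q}\in\widetilde{C}^\circ\cap\partial H(\widetilde{P})$; combined with $\widetilde{C}^\circ\subset H(\widetilde{P})$ this says precisely that $H(\widetilde{P})$ supports $\widetilde{C}^\circ$ (at the point $\widetilde{Q}$), which proves the lemma. I do not expect a serious obstacle here: the argument is formal once Lemma \ref{lemmainclusion} is available. The only points requiring care are the invocation of a supporting hemisphere of $\widetilde{C}$ at the boundary point $\widetilde{P}$, and the identification $H(\widetilde{Q})^\circ=\{\widetilde{Q}\}$; it is also worth remarking that $\widetilde{C}^\circ$ is nonempty (indeed it contains $\widetilde{Q}$), so the statement is not vacuous.
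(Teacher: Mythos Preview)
Your proof is correct and follows essentially the same approach as the paper: both take a supporting hemisphere $H(\widetilde{Q})$ of $\widetilde{C}$ at $\widetilde{P}$, note that $\widetilde{C}^\circ\subset H(\widetilde{P})$ by definition, and then verify that $\widetilde{Q}\in\widetilde{C}^\circ\cap\partial H(\widetilde{P})$. The only minor difference is that the paper invokes Lemma~\ref{boundarysupport} (applied to $\widetilde{C}^\circ$, using $(\widetilde{C}^\circ)^\circ=\widetilde{C}$) to place $\widetilde{Q}$ on $\partial\widetilde{C}^\circ$, whereas you obtain $\widetilde{Q}\in\widetilde{C}^\circ$ more directly via Lemma~\ref{lemmainclusion} and the identification $H(\widetilde{Q})^\circ=\{\widetilde{Q}\}$.
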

\begin{proof}
By definition of spherical polar set, $\widetilde{C}^\circ$ is a subset of $H(\widetilde{P})$.
Let $H(\widetilde{Q})$ be a supporting hemisphere of $\widetilde{C}$ at $H(\widetilde{P})$.
Then by Lemma \ref{boundarysupport}, $\widetilde{Q}$ is a boundary point of $\widetilde{C}^\circ$ and $|\widetilde{P}\widetilde{Q}|=\pi/2$. Thus, it follows that
\[
\widetilde{Q}\in \partial \widetilde{C}^\circ\cap H(\widetilde{P}).
\]
Therefore, $H(\widetilde{P})$ supports $\widetilde{C}^\circ$ at $\widetilde{Q}$.
\end{proof}
By the proof of Lemma \ref{lemmadualboundary}, we have the following:
\begin{lemma}
Let $C$ be a spherical convex body in $S^{n+1}$. The
following two assertions are equivalent:
\begin{enumerate}
\item the hemisphere $H(\widetilde{P})$ supports $\widetilde{C}^\circ$ at $\widetilde{Q}$;
\item the hemisphere $H(\widetilde{Q})$ supports $\widetilde{C}$ at $\widetilde{P}$.
\end{enumerate}
\end{lemma}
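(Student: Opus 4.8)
The plan is to notice that the proof of Lemma~\ref{lemmadualboundary} already delivers one of the two implications, and to obtain the other one for free by exploiting the symmetry of the statement under polarity. Throughout I would use the biduality relation $\widetilde{C}^{\circ\circ}=\widetilde{C}$ coming from Proposition~\ref{dualconvex} applied to $\widetilde{C}=\mbox{s-conv}(\widetilde{C})$, together with the fact that $\widetilde{C}^{\circ}$ is again a spherical convex body, so that Lemma~\ref{boundarysupport} may be invoked with $\widetilde{C}$ replaced by $\widetilde{C}^{\circ}$.

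For the implication $(2)\Rightarrow(1)$ I would simply reproduce the argument in the proof of Lemma~\ref{lemmadualboundary}. Assume $H(\widetilde{Q})$ supports $\widetilde{C}$ at $\widetilde{P}$. Then $\widetilde{P}\in\partial\widetilde{C}\subset\widetilde{C}$, so by the definition of the spherical polar set $\widetilde{C}^{\circ}\subset H(\widetilde{P})$. Applying Lemma~\ref{boundarysupport} to $\widetilde{C}^{\circ}$, whose polar is $\widetilde{C}$ by biduality, the center $\widetilde{Q}$ of the supporting hemisphere $H(\widetilde{Q})$ of $(\widetilde{C}^{\circ})^{\circ}=\widetilde{C}$ at $\widetilde{P}$ is a boundary point of $\widetilde{C}^{\circ}$; and $\widetilde{P}\in\partial H(\widetilde{Q})$ forces $\widetilde{P}\cdot\widetilde{Q}=0$, hence $\widetilde{Q}\in\partial H(\widetilde{P})$. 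Combining, $\widetilde{Q}\in\partial\widetilde{C}^{\circ}\cap\partial H(\widetilde{P})$ while $\widetilde{C}^{\circ}\subset H(\widetilde{P})$, which is precisely the assertion that $H(\widetilde{P})$ supports $\widetilde{C}^{\circ}$ at $\widetilde{Q}$, i.e. statement~$(1)$.

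For $(1)\Rightarrow(2)$ I would apply the implication just proved with the substitution $\widetilde{C}\leftrightarrow\widetilde{C}^{\circ}$, $\widetilde{P}\leftrightarrow\widetilde{Q}$: since $\widetilde{C}^{\circ}$ is a spherical convex body with $(\widetilde{C}^{\circ})^{\circ}=\widetilde{C}$, the implication $(2)\Rightarrow(1)$ read for $\widetilde{C}^{\circ}$ says that if $H(\widetilde{P})$ supports $\widetilde{C}^{\circ}$ at $\widetilde{Q}$, then $H(\widetilde{Q})$ supports $(\widetilde{C}^{\circ})^{\circ}=\widetilde{C}$ at $\widetilde{P}$. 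The only point that needs genuine care — and hence the main, though minor, obstacle — is the bookkeeping around biduality: one must check that $\widetilde{C}^{\circ}$ really is a spherical convex body with $\widetilde{C}^{\circ\circ}=\widetilde{C}$, so that Lemma~\ref{boundarysupport} and the definition of ``supporting hemisphere'' transfer verbatim when the roles of $\widetilde{C}$ and $\widetilde{C}^{\circ}$ are interchanged; everything else is a direct unwinding of definitions.
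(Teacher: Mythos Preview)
Your proposal is correct and follows essentially the same route as the paper, which simply records the lemma as an immediate consequence of the proof of Lemma~\ref{lemmadualboundary}. You have in fact been slightly more explicit than the paper about the biduality step $\widetilde{C}^{\circ\circ}=\widetilde{C}$ needed to run the argument with $\widetilde{C}$ and $\widetilde{C}^{\circ}$ interchanged, but this is the intended reasoning.
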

A spherical convex body $\widetilde{C}$ is said to be {\it smooth} if  for any boundary point $\widetilde{P}$ of $\widetilde{C}$,
 exactly one hemisphere supports $\widetilde{C}$ at $\widetilde{P}$.
If hemispheres $H(\widetilde{P})$ and $H(\widetilde{Q})$ of $S^{n+1}$ are different and $\widetilde{P}\neq -\widetilde{Q}$, 
then the intersection $H(\widetilde{P})\cap H(\widetilde{Q})$ is called a {\it lune} of $S^{n+1}$. 
The {\it thickness of lune} $H(\widetilde{P})\cap H(\widetilde{Q})$, denoted by $\Delta (H(\widetilde{P})\cap H(\widetilde{Q}))$, is given by 
\[
\Delta (H(\widetilde{P})\cap H(\widetilde{Q}))=\pi-|\widetilde{P}\widetilde{Q}|. 
\]
If $H(\widetilde{P})$ is a supporting hemisphere of a spherical convex body $\widetilde{C}$, 
{\it the width of} $\widetilde{C}$ with respect to $H(\widetilde{P})$, denoted by $\mbox{width}_{H(\widetilde{P})}(\widetilde{C})$, is defined by 
\[
\mbox{width}_{H(\widetilde{P})} (\widetilde{C}) =\mbox{min}\{\Delta(H(\widetilde{P})\cap H(\widetilde{Q}))| H(\widetilde{Q})\ \mbox{supports}\ \widetilde{C}\}.
\] We say that a spherical convex body $\widetilde{C}$ is {\it of constant width},
if all widths of $\widetilde{C}$ with respect to any supporting hemispheres are equal. 
Following \cite{Lassak15}, 
we define {\it the thickness} of a convex body $\widetilde{C}\subset S^n$, denoted by $\Delta(\widetilde{C})$,  as 
the minimum of $\mbox{width}_{H(\widetilde{P})} (\widetilde{C})$ over all supporting hemispheres $H(\widetilde{P})$ of $\widetilde{C}$.
That is 
\[
\Delta(\widetilde{C}) = \mbox{min}\{\mbox{width}_{\widetilde{K}}(\widetilde{C})| \widetilde{K}\ \mbox{is\ a\ supporting\ hemisphere\ of\ }\widetilde{C}\}.
\]
More details on width, thickness of spherical convex bodies, see for instance 
\cite{perimeter20,  hnjmsj, Lassak22, Musidlak20}.
\par
\bigskip
{\color{black}
Theorem \ref{theorem2} is a counterpart of the following result of \cite{Lassak22-1}.}
\begin{theorem}[\cite{Lassak22-1}]\label{theorem3}
For any spherical convex body $C\subset S^2$ of constant width $\tau<\pi/2$, 
and for any $\varepsilon>0$ 
there exists a body $C_\varepsilon$ of constant width $\Delta(C)=\Delta(C_\varepsilon)$ 
whose boundary consists only of arcs of circles of radius $\Delta(C)$ 
such that 
\[
h(C, C_\varepsilon)\leq \varepsilon,
\] 
where $h(C_1, C_2)$ means the Hausdorff distance between $C_1$ and $C_2$ .
\end{theorem}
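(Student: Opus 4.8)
The plan is to realize the approximants $C_\varepsilon$ as spherical Reuleaux polygons of width $\tau$, assembled from finitely many boundary points of $\widetilde C=C$ together with their diametral partners, and to control the Hausdorff distance through the density of the chosen points. Throughout I exploit that $\tau<\pi/2$ keeps every cap $B(Q,\tau)=\{X\in S^2\mid |XQ|\le\tau\}$ of radius $\tau$ a proper spherical convex body (diameter $2\tau<\pi$), so that the geometry behaves qualitatively like the Euclidean one and no antipodal degeneracies intervene.

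First I would establish the structural description of $C$ that makes the radius-$\tau$ arcs appear, namely that a body of constant width $\tau<\pi/2$ is \emph{diametrically complete}: $\mathrm{diam}(C)=\tau$, one has $C=\bigcap_{Q\in C}B(Q,\tau)$, and every boundary point $X$ admits an opposite boundary point $X^{*}$ with $|XX^{*}|=\tau$. This I would read off from the lune/width definitions together with Lemma \ref{boundarysupport} and Lemma \ref{lemmadualboundary}: a supporting hemisphere $H(P)$ of $C$ touching at $X$ is matched, by constant width, with the opposite supporting hemisphere $H(P')$ satisfying $|PP'|=\pi-\tau$ and touching at $X^{*}$, and tracing the contact points through the polar correspondence of Lemma \ref{boundarysupport} yields $|XX^{*}|=\tau$ and $C\subset B(X^{*},\tau)$. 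This equips $\partial C$ with an opposite-point map $X\mapsto X^{*}$, multivalued only at the corners of $C$.

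Next I would discretize. Choosing a net $X_1,\dots,X_m\in\partial C$ that is $\delta$-dense (with $\delta$ fixed later in terms of $\varepsilon$) and passing to opposite points $X_1^{*},\dots,X_m^{*}$, I would assemble the radius-$\tau$ arcs centered at these points into a single closed convex curve: each arc of $\partial C_\varepsilon$ lies on some $\partial B(X_j^{*},\tau)$, consecutive arcs meet at (images of) the $X_j$, and the arcs are arranged in an odd cycle so that the opposite-point pairing is respected all the way around. By construction such a spherical Reuleaux polygon has constant width \emph{exactly} $\tau$, and its boundary consists only of arcs of circles of radius $\tau=\Delta(C)$. Since $C=\bigcap_{Q\in C}B(Q,\tau)$, refining the net makes the centers dense in $\partial C$, so $C_\varepsilon\subset\bigcap_j B(X_j^{*},\tau)$, a finite intersection that decreases to $C$ as $\delta\to 0$; as $C_\varepsilon$ itself has width $\tau$ it cannot shrink away inside this region, and $h(C,C_\varepsilon)\le\varepsilon$ follows once $\delta$ is small enough. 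Where convenient the estimate can be transported to the polar bodies by Proposition \ref{dualisometric}, since there the constant-width condition and the radius-$\tau$ caps take the complementary form built from $\pi/2-\tau$ and the comparison is cleaner.

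The main obstacle is the closing-up step: verifying that the finite system of radius-$\tau$ arcs centered at the $X_j^{*}$ really bounds a convex region of constant width exactly $\tau$, rather than merely a diameter-$\tau$ body that happens to lie near $C$. Concretely one must (i) choose the net so the induced opposite-point assignment is consistent and produces an odd number of arcs that meet tangentially and close up into a $C^{0}$ convex curve, (ii) handle the corners of $C$, where $X^{*}$ is an arc of candidate centers rather than a single point, by selecting representatives that preserve convexity, and (iii) confirm that this completion does not drift from $C$ beyond the allotted $\varepsilon$. Steps (i)--(ii) carry the spherical Reuleaux-polygon combinatorics, where the parity and convexity of the arc system must be argued carefully; the hypothesis $\tau<\pi/2$ is exactly what keeps the caps small enough for the construction to stay well defined and convex, in contrast with the large-width regime treated in Theorem \ref{theorem2}.
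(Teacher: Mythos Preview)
The paper does not prove Theorem~\ref{theorem3} at all: it is quoted verbatim from \cite{Lassak22-1} and used as a black box in the proof of Theorem~\ref{theorem2} (take the polar body, apply Theorem~\ref{theorem3} to it, and transport the approximation back via Proposition~\ref{dualisometric} and Lemma~\ref{lemmaboundary}). So there is no in-paper proof to compare your proposal against.

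As for your outline on its own merits: the Reuleaux-polygon strategy is the natural one and is essentially what Lassak carries out in \cite{Lassak22-1}, but your write-up is a plan rather than a proof. You yourself flag the ``closing-up step'' as the main obstacle and then leave it open: you do not actually show that a $\delta$-dense sample of $\partial C$ together with diametral partners can be ordered into an odd cycle of radius-$\tau$ arcs bounding a \emph{convex} region of constant width exactly $\tau$, nor do you control the Hausdorff distance quantitatively. The parity/consistency issues (i)--(ii) you list are genuine and require a concrete construction, not just an existence assertion; in Lassak's argument these are handled by an explicit inductive completion procedure, not by merely intersecting the caps $B(X_j^{*},\tau)$ (that intersection is diametrically complete of diameter $\tau$, hence of constant width $\tau$, but it need not have boundary made \emph{only} of radius-$\tau$ arcs unless the centers are chosen with care). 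Until that step is written out, the proposal has a real gap.
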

\section{Proof of Theorem \ref{maintheorem}}
Let $H(\widetilde{Q}_1), H(\widetilde{Q}_2), H(\widetilde{Q}_3)$ be different supporting hemispheres of 
$\widetilde{C}\subset S^2$. 
Following \cite{LM18-1}, we write $\prec \widetilde{Q_1}\widetilde{Q_2}\widetilde{Q_3}$,
 if $\widetilde{Q}_1, \widetilde{Q}_2, \widetilde{Q}_3$ are in this order on the boundary of 
 $\widetilde{C}^\circ$
when viewed from the inside of $\widetilde{C}^\circ$. 
The supporting hemispheres $H(\widetilde{Q}_1)$ and $H(\widetilde{Q}_2)$ of $\widetilde{C}$ are said to be {\it extreme supporting hemispheres of} $\widetilde{C}$ 
if 
$H(\widetilde{Q}_1)$ and $H(\widetilde{Q}_2)$ 
be hemispheres supporting 
$\widetilde{C}$ (at $\widetilde{P}$) such that 
$\prec \widetilde{Q}_1\widetilde{Q}_3\widetilde{Q}_2$
for every hemisphere $H(\widetilde{Q_3})$ supporting $\widetilde{C}$ (at $\widetilde{P}$). 
Let  $c_1, c_2$ be centers of the two semi-great circles bounding $H(\widetilde{Q_1})\cap H(\widetilde{Q_2})$.
Denoted by $\angle_s \widetilde{P}$ the angel between the sides $\widetilde{P}c_1$ and $\widetilde{P}c_2$.
\par
The following simple observation is useful in the coming proofs.
\begin{lemma}\label{lemmanonsmooth}
Let $\widetilde{P}$ be a non-smooth point of a spherical convex body $\widetilde{C}\subset S^2$.
Let $H(\widetilde{Q})$ and $H(\widetilde{R})$ be extreme supporting hemispheres of 
$\widetilde{C}$ at $\widetilde{P}$.
Then 
\[\angle_s \widetilde{P}+|\widetilde{Q}\widetilde{R}|=\pi.
\] 
\end{lemma}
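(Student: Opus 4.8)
The plan is to reduce the claim to a statement about the polar body $\widetilde C^\circ$ and a right‑triangle type identity on $S^2$. First I would pass to the polar body: by Lemma~\ref{boundarysupport} and the lemma following Lemma~\ref{lemmadualboundary}, the hemispheres $H(\widetilde Q)$ and $H(\widetilde R)$ supporting $\widetilde C$ at the non‑smooth point $\widetilde P$ correspond exactly to the condition that $\widetilde Q,\widetilde R\in\partial\widetilde C^\circ$ and that $H(\widetilde P)$ supports $\widetilde C^\circ$ at both $\widetilde Q$ and $\widetilde R$. Since $\widetilde Q,\widetilde R$ are the \emph{extreme} supporting hemispheres, the arc $\widetilde Q\widetilde R$ is precisely the portion of $\partial\widetilde C^\circ$ cut out by the supporting hemisphere $H(\widetilde P)$; in other words $\widetilde Q\widetilde R=\widetilde C^\circ\cap\partial H(\widetilde P)$, a sub‑arc of the great circle $\partial H(\widetilde P)$, and $|\widetilde P\widetilde Q|=|\widetilde P\widetilde R|=\pi/2$.

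Next I would set up the spherical geometry. The two semi‑great circles bounding the lune $H(\widetilde Q_1)\cap H(\widetilde Q_2)=H(\widetilde Q)\cap H(\widetilde R)$ have centers $c_1,c_2$, and by definition of a lune these are the two points at spherical distance $\pi/2$ from \emph{both} $\widetilde Q$ and $\widetilde R$; equivalently $\{c_1,c_2\}=\partial H(\widetilde Q)\cap\partial H(\widetilde R)$, the two poles of the great circle through $\widetilde Q$ and $\widetilde R$. The angle $\angle_s\widetilde P$ is the angle at $\widetilde P$ in the spherical triangle $\widetilde P c_1 c_2$ (one should check $\widetilde P\neq\pm c_i$, which holds because $\widetilde P$ lies on $\partial H(\widetilde Q)\cap\partial H(\widetilde R)$ would force $\widetilde P\in\{c_1,c_2\}$, contradicting that $H(\widetilde Q),H(\widetilde R)$ are distinct supporting hemispheres at a non‑smooth point). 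The key observation is that $\widetilde Q$ and $\widetilde R$ are poles of the great circles $\widetilde P c_1$ and $\widetilde P c_2$ respectively: indeed $|\widetilde Q\widetilde P|=\pi/2$ and $|\widetilde Q c_1|=\pi/2$, so $\widetilde Q$ is a pole of the great circle containing the side $\widetilde P c_1$, and similarly for $\widetilde R$ and the side $\widetilde P c_2$. Hence the dihedral angle between the planes of these two great circles — which is exactly $\angle_s\widetilde P$ — equals the distance between their poles taken on the appropriate side, i.e. $|\widetilde Q\widetilde R|$ or $\pi-|\widetilde Q\widetilde R|$.

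To pin down which of the two it is, I would use the orientation/order condition $\prec\widetilde Q\widetilde Q_3\widetilde R$ that defines extremality together with the fact that $\widetilde P$ is an interior‑side vertex of the lune: choosing $c_1,c_2$ as labelled by the paper, the arcs $\widetilde P c_1$ and $\widetilde P c_2$ emanate from $\widetilde P$ into the lune, so the poles $\widetilde Q,\widetilde R$ are separated by the great circle $\partial H(\widetilde P)$ in the way that makes $|\widetilde Q\widetilde R|$ the \emph{exterior} supplement of $\angle_s\widetilde P$; this yields $\angle_s\widetilde P+|\widetilde Q\widetilde R|=\pi$. Concretely one can compute $\widetilde Q\cdot\widetilde R=\cos|\widetilde Q\widetilde R|$ by writing $\widetilde Q,\widetilde R$ in an orthonormal frame adapted to $\widetilde P$ (with $\widetilde P$ as a coordinate axis, $c_1,c_2$ in the equatorial plane at angle $\angle_s\widetilde P$ apart), and checking the sign directly.

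The main obstacle I anticipate is bookkeeping the signs and the choice of which pole is $\widetilde Q$ versus $\widetilde R$ and which semicircle center is $c_1$ versus $c_2$: the bare distances and angles only determine the identity up to replacing $\angle_s\widetilde P$ by $\pi-\angle_s\widetilde P$ or $|\widetilde Q\widetilde R|$ by $\pi-|\widetilde Q\widetilde R|$, and getting the stated identity (rather than $\angle_s\widetilde P=|\widetilde Q\widetilde R|$) requires carefully invoking that $\widetilde P$ is non‑smooth (so $|\widetilde Q\widetilde R|<\pi$, giving a genuine lune of positive thickness) and that $H(\widetilde Q),H(\widetilde R)$ are the extreme supporting hemispheres on the correct side. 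Once the frame is fixed and the extremality/orientation convention is translated into an inequality on coordinates, the identity $\angle_s\widetilde P+|\widetilde Q\widetilde R|=\pi$ falls out of a short trigonometric computation.
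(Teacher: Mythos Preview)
Your proposal contains a genuine misidentification of the points $c_1,c_2$. You take $\{c_1,c_2\}=\partial H(\widetilde Q)\cap\partial H(\widetilde R)$, i.e.\ the two \emph{corners} of the lune. But those corners are exactly $\{\widetilde P,-\widetilde P\}$, since $H(\widetilde Q)$ and $H(\widetilde R)$ both support $\widetilde C$ at $\widetilde P$ (so $\widetilde P\cdot\widetilde Q=\widetilde P\cdot\widetilde R=0$). With your identification the arcs $\widetilde Pc_1$, $\widetilde Pc_2$ are degenerate and $\angle_s\widetilde P$ is undefined; your own parenthetical check of $\widetilde P\neq\pm c_i$ would in fact fail. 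The paper's $c_1,c_2$ are the \emph{midpoints} of the two semi-great-circles bounding the lune (one on $\partial H(\widetilde Q)$, one on $\partial H(\widetilde R)$), not the corners.

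Once $c_1,c_2$ are identified correctly, the polar-body detour and the triangle/pole bookkeeping you propose are unnecessary. The paper's argument is a one-line observation: since $\widetilde P$ is a corner of the lune, all four points $\widetilde Q,\widetilde R,c_1,c_2$ lie on the equator $\partial H(\widetilde P)$ (indeed $|\widetilde P\widetilde Q|=|\widetilde P\widetilde R|=|\widetilde Pc_1|=|\widetilde Pc_2|=\pi/2$). Because $\widetilde P$ is the pole of that great circle, $\angle_s\widetilde P=|c_1c_2|$ directly. And on this circle $|\widetilde Qc_1|=|\widetilde Rc_2|=\pi/2$ with the cyclic order $c_2,\widetilde Q,\widetilde R,c_1$, so $|\widetilde Qc_1|+|\widetilde Rc_2|=|\widetilde Q\widetilde R|+|c_1c_2|$, giving $\pi=|\widetilde Q\widetilde R|+\angle_s\widetilde P$. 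There is no sign ambiguity to resolve and no need to invoke the dual body.
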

\begin{figure}[htbp]\label{figure1}
  \includegraphics[width=8cm]{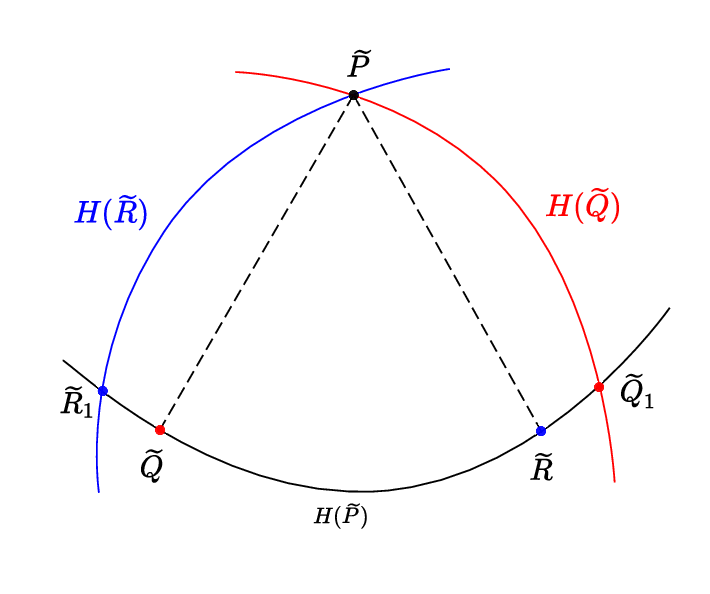}
  \caption{$\angle_s \widetilde{P}+|\widetilde{Q}\widetilde{R}|=\pi.$}
  \label{figure1}
\end{figure}
\begin{proof} 
Since $H(\widetilde{Q}), H(\widetilde{R})$ support $\widetilde{C}$ at $\widetilde{P}$, we know that $\widetilde{Q}, \widetilde{R}$ are two points of $\partial H(\widetilde{P})$.
Thus it is easily follows from following equality (see Figure 1):
\[
\pi=|\widetilde{Q}\widetilde{Q}_1|+|\widetilde{R}\widetilde{R}_1|=|\widetilde{Q}\widetilde{R}|+|\widetilde{Q}_1\widetilde{R}_1|=|\widetilde{Q}\widetilde{R}|+\angle_s \widetilde{P}.
\]
\end{proof}

(1)$\implies$ (2):
Since $P\cdot Q\leq 1$ with equality if and only if $Q=P$, 
we know that $\gamma(P)$ is locally maximum,  
\[
\gamma(P)=\gamma(P)P\cdot P\geq\gamma(P)P\cdot Q=\gamma (Q)
\] for any $Q\in S^n\cap B(P, \delta)$.
Let ${\rm Gr}(n+1, 2)^P$ be any Grassmanian space ${\rm Gr}(n+1, 2)$ of $\mathbb{R}^{n+1}$ contains $\gamma(P)P$. 
Thus it is sufficient to prove that $\gamma(P)P$ is a non-smooth point of 
$\mathcal{W}_\gamma\cap {\rm Gr}(n+1, 2)^P$. 
Identifying ${\rm Gr}(n+1, 2)^P$ with $\mathbb{R}^2$ and 
set 
\[
\mathcal{W}_\gamma^{Gr}= \mathcal{W}_\gamma\cap Gr(n+1, 2)^P
\ \mbox{and}\ 
B_P=B(P, \delta)\cap {\rm Gr}(n+1, 2)^P\subset S^1.
\]
By assumption of (1), we have
\[
\gamma(Q)=\gamma(P)P\cdot Q,
\]
for any point $Q$ of $B_P$. 
This means the inversion of $(Q, \gamma(Q))$ is $(-Q, \frac{1}{\gamma(P)P\cdot Q})$, 
for any point $Q$ of $B_P$.  
Hence, it follows that
\[
\frac{1}{\gamma(P)P\cdot Q}(-Q)\cdot (-P)=\frac{1}{\gamma(P)},
\]
for any $Q\in B_P$.
Therefore,  $\left\{\mbox{inv}(Q, \gamma(Q))\mid Q\in B_P\right\}$ 
is an open subset of the line 
\[
l_{-P}=\{R\in \mathbb{R}^2\mid R\cdot -P=1/\gamma(P)\}.
\]
This means $l_{-p}$ is a supporting line of the convex hull of 
\[  
\tag{$\star$}
\left\{\mbox{inv}(Q, \gamma(Q))\mid Q\in S^n \cap G_{n+1, 2}^P\right\}.
\]
Since 
\[
\left(R-(-P\frac{1}{\gamma(P)})\right)\cdot (-P)=0
\ 
\mbox{for any}\ R\in l_{-P},
\]
we know that $-P$ is a unit normal vector of $l_{-P}$.
{\color{black}
Notice that 
\[
\mbox{inv}(\gamma(P)P)=(-P, \frac{1}{\gamma(P)})
\]
is a point of $l_{-p}$.
By Proposition \ref{psi}, 
\[
\mbox{Id}^{-1}\circ \alpha_N \circ \Psi_N\circ \alpha_N^{-1} \circ{\mbox Id}(\gamma(P)P)=\mbox{{\rm inv}}(\gamma(P)P)\in l_{-p}.
\]
Then it follows that
\[
\tag{$\star\star$}
\Psi_N\circ \alpha_N^{-1} \circ{\mbox Id}(\gamma(P)P)
=
\alpha_N^{-1}\circ\mbox{Id}\circ\mbox{{\rm inv}}(\gamma(P)P)
\in \alpha_N^{-1}\circ\mbox{Id}(l_{-P}).
\]
Since $P$ is a unit normal vector of $l_{-P}$, by definition of $\Psi_N$ and ($\star\star$), 
we have that 
\[
\tag{$\star\star\star$}
\alpha_N^{-1}\circ\mbox{Id}(l_{-P})
\subset 
\partial H( \alpha_N^{-1} \circ{\mbox Id}(\gamma(P)P)).
\] 
Moreover, since 
\[
\{
\mbox{inv}(Q, \gamma(Q))\mid Q\in B_P
\}
\subset
l_{-P},
\]
one knows that 
\[
\alpha_N^{-1}\circ \mbox{Id}\left(\left\{\mbox{inv}(Q, \gamma(Q))\mid Q\in B_P\right\}\right)
\]
is a great circle segment of 
$\partial H( \alpha_N^{-1} \circ{\mbox Id}(\gamma(P)P))$. 
Set 
\[
\mathcal{D}=\alpha_N^{-1}\circ \mbox{Id}\left(\left\{\mbox{inv}(Q, \gamma(Q))\mid Q\in B(P, \delta)\cap G_{n+1, 2}^P\right\}\right)
\ \mbox{and}\ 
\widetilde{\mathcal{W}}_\gamma^{Gr}=\alpha_N^{-1}\circ\mbox{Id}(\mathcal{W}_\gamma^{Gr}).
\]
Then by 
 Proposition \ref{sphericalmethod}, we have that
\[
\mathcal{D}\subset \partial(\widetilde{\mathcal{W}}_\gamma^{Gr})^\circ.
\]
By ($\star$) and ($\star\star\star$), one obtains that $H\left(\alpha_N^{-1} \circ{\mbox Id}(\gamma(P)P)\right)$ is a supporting hemisphere of 
$(\widetilde{\mathcal{W}}_\gamma^{Gr})^\circ$.
Then by Lemma \ref{lemmanonsmooth} and Lemma \ref{boundarysupport}, we have that
$\alpha_N^{-1} \circ{\mbox Id}(\gamma(P)P)$ 
is a non-smooth point of $\partial\widetilde{\mathcal{W}}_\gamma^{Gr}$, 
or equivalently $(P, \gamma(P))$ 
is a non-smooth point of $\partial \mathcal{W}_\gamma^{Gr}$.
By arbitrariness of ${\rm Gr}(n+1, 2)^P$, we know that $(P, \gamma(P))$ is an apex point of $\mathcal{W}_\gamma$.
Since $(P, \gamma(P))=\gamma(P)P$, it is clear that 
$(P, \gamma(P))\in \mathbb{R}_+ P\cap \partial \mathcal{W}_\gamma$.
}
\par
(2)$\implies$ (3):
By definition of apex, $P$ is a non-smooth point of 
\[
\mathcal{W}_\gamma^{Gr}= \mathcal{W}_\gamma\cap Gr(n+1, 2)^P.
\]
By the proof of $(1)$  implies $(2)$, we know that
 $\left\{\mbox{inv}(Q, \gamma(Q))\mid Q\in B(P, \delta)\cap G_{n+1, 2}^P\right\}$ 
is a subset of the line 
\[
l_{-P}=\{R\in \mathbb{R}^2\mid R\cdot -P=1/\gamma(P)\}, 
\]
and contains $(-P, 1/\gamma(P))$ as its relative interior.
By arbitrarily of $Gr(n+1,2)^P$ and compactness of $S^n$, we know that
there exists a positive number $\delta$ such that 
\[
\{(-Q, 1/\gamma(Q))\mid Q\in B(P, \delta)\cap S^n\}
\] is a pice of $\mathcal{H}^n$, where 
$\mathcal{H}^n$ is the affine hyperplane through $\frac{1}{\gamma(P)}(-P),$ and has a normal vector $-P$.
\par
(3)$\implies$ (1): 
Let $Q$ be a point of $B_s^n(P, \delta)=\{Q\in S^n\mid |PQ|<\delta\}$.
By assumption, $(-Q, 1/\gamma(Q))$ be a point of 
the affine hyperplane through $\frac{1}{\gamma(P)}(-P),$ and has a normal vector $-P$. 
Then we have that 
\[
(-Q, \frac{1}{\gamma(Q)})\cdot -P=\frac{1}{\gamma(P)}.
\]
That is 
\[
\frac{\frac{1}{\gamma(P)}}{\frac{1}{\gamma(Q)}}=P\cdot Q.
\]
Thus, the quality $\gamma(Q)=\gamma(P)P\cdot Q$ holds, 
for any $Q$ of $B(P, \delta)\cap S^n$.  \hfill{$\square$}
\par
\bigskip
The  {\it dual Wull shape of } $\mathcal{W}_\gamma$, denoted by $\mathcal{DW}_\gamma$, is the convex hull of 
\[
\{\mbox{inv}(\theta, \gamma(\theta))\mid \theta\in S^n\}.
\] 
As a corollary of Theorem \ref{maintheorem}, we have the
following:
\begin{corollary}
Let $\mathcal{W}_\gamma$ be a Wulff shape. 
The following two assertions are equivalent:
\begin{enumerate}
\item the boundary of $\mathcal{W}_\gamma$ contains an apex point;
\item the boundary of $\mathcal{DW}_\gamma$ contains an $n-$dimensional affine ball $\mathcal{B}$ with center $M$ such that the affine subspace contains $\mathcal{B}$ has normal $M$.
\end{enumerate}
\end{corollary}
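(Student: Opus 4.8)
The plan is to derive the Corollary from Theorem~\ref{maintheorem} by translating apex points of $\mathcal{W}_\gamma$ into flat facets of $\mathcal{DW}_\gamma$. Since $\gamma$ is a convex integrand we have $\partial\mathcal{DW}_\gamma=\{\mathrm{inv}(\theta,\gamma(\theta))\mid\theta\in S^n\}$, and I will use the correspondence
\[
P=-\frac{M}{\|M\|}\in S^n,\qquad \gamma(P)=\frac{1}{\|M\|},\qquad M=\mathrm{inv}(\gamma(P)P)=\frac{1}{\gamma(P)}(-P),
\]
under which the hyperplane spanned by an $n$-dimensional ball of centre $M$ with normal vector $M$ is exactly the hyperplane $\mathcal{H}^n$ of Theorem~\ref{maintheorem}(3). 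Thus it is enough to show that each of assertions (1) and (2) is equivalent to the statement $(\ast)$: \emph{there is a $P\in S^n$ for which statement~(2) of Theorem~\ref{maintheorem} holds.}

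\smallskip

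\emph{$(\ast)\Leftrightarrow(2)$.} If $(\ast)$ holds, Theorem~\ref{maintheorem} gives $\delta>0$ with $\{\mathrm{inv}(Q,\gamma(Q))\mid Q\in B_s^n(P,\delta)\}\subset\mathcal{H}^n\subset\partial\mathcal{DW}_\gamma$ and $\gamma(Q)=\gamma(P)(P\cdot Q)$ there, so, by the spherical-cap remark following Theorem~\ref{maintheorem}, the graph of $\gamma$ over this cap is a piece of the sphere of centre $\tfrac12\gamma(P)P$ and radius $\tfrac12\gamma(P)$, invariant under the rotations fixing the line $\mathbb{R}P$. Because $\mathrm{inv}$ carries spheres through the origin to hyperplanes and commutes with those rotations, its image is a round $n$-ball $\mathcal{B}\subset\mathcal{H}^n$ centred at the point $M$ where $\mathbb{R}P$ meets $\mathcal{H}^n$, and $\mathcal{H}^n$ has normal $M$ since $M\parallel P$; this is assertion~(2). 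Conversely, given such a ball $\mathcal{B}$ with centre $M$: the origin lies in $\mathrm{int}\,\mathcal{DW}_\gamma$ (as $\gamma$ is bounded away from $0$ and $\infty$), so $M\neq0$; since $M\in\mathcal{B}\subset\partial\mathcal{DW}_\gamma$ sits over the direction $-M/\|M\|$, the displayed correspondence is forced with $P=-M/\|M\|$, the hyperplane carrying $\mathcal{B}$ coincides with $\mathcal{H}^n$ and supports $\mathcal{DW}_\gamma$ along $\mathcal{B}$, and $M$ lies in the relative interior of the $n$-face $\mathcal{B}$. Hence $\mathrm{inv}(Q,\gamma(Q))\in\mathcal{H}^n$ for all $Q$ in a small cap $B_s^n(P,\delta)$, which is statement~(3) of Theorem~\ref{maintheorem}, so $(\ast)$ holds.

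\smallskip

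\emph{$(\ast)\Leftrightarrow(1)$.} That $(\ast)\Rightarrow(1)$ is immediate, as statement~(2) of Theorem~\ref{maintheorem} already asserts that $\mathbb{R}_+P\cap\partial\mathcal{W}_\gamma$ is an apex point. For the converse, let $A$ be an apex point of $\mathcal{W}_\gamma$. First, $A$ must be a vertex of $\mathcal{W}_\gamma$: were $A$ in the relative interior of a face of positive dimension, a $2$-plane through the origin spanned by $A$ and a direction of that face would display $A$ as a smooth point of a planar section of $\mathcal{W}_\gamma$, against the definition of apex. So the outer normal cone $N$ of $\mathcal{W}_\gamma$ at $A$ is $(n+1)$-dimensional; since for a convex integrand $\gamma$ equals the support function of $\mathcal{W}_\gamma$, one has $\gamma(\theta)=A\cdot\theta$ for $\theta\in N$, so $\{\mathrm{inv}(\theta,A\cdot\theta)\mid\theta\in N\}$ is a flat $n$-face $\mathcal{F}$ of $\mathcal{DW}_\gamma$ contained in the hyperplane $\{x\mid x\cdot A=-1\}$ perpendicular to $A$. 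The crux --- and what I expect to be the main obstacle --- is to prove that $A/\|A\|$ lies in the interior of $N$ (equivalently, that $\gamma$ attains a local maximum at $A/\|A\|$ and $A=\gamma(A/\|A\|)\,\tfrac{A}{\|A\|}$), for then $P:=A/\|A\|$ satisfies statement~(2) of Theorem~\ref{maintheorem} and the previous paragraph closes the argument. Here the convex-integrand hypothesis must do real work: since $\mathcal{DW}_\gamma$ is convex with the origin in its interior, I would argue that the foot of the perpendicular from the origin to the supporting hyperplane $\{x\mid x\cdot A=-1\}$ must lie in the facet $\mathcal{F}$; under the homeomorphism $\theta\mapsto\mathrm{inv}(\theta,A\cdot\theta)$ of $N$ onto $\mathcal{F}$ this foot corresponds to $A/\|A\|$, and ``foot in the relative interior of $\mathcal{F}$'' becomes ``$A/\|A\|$ in the interior of $N$''. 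For a general convex body a ``tilted'' vertex whose radial direction escapes its normal cone is possible, so it is precisely the convexity of $\mathcal{DW}_\gamma$ that is being exploited; the remaining points --- apex $=$ vertex, and that $\mathrm{inv}$ takes a rotationally symmetric spherical cap to a round ball --- are routine and essentially already appear in the proof of Theorem~\ref{maintheorem}.
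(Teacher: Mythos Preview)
The paper offers no proof of the Corollary beyond the sentence ``As a corollary of Theorem~\ref{maintheorem}, we have the following,'' so there is nothing detailed to compare against. Your reduction of assertion~(2) of the Corollary to statement~(3) of Theorem~\ref{maintheorem} via $M=-P/\gamma(P)$ is exactly right and is plainly what the paper intends; this handles $(\ast)\Leftrightarrow(2)$ and $(\ast)\Rightarrow(1)$ cleanly.

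The gap is precisely where you flag it, and your proposed fix does not work. The claim that the foot of the perpendicular from the origin to the supporting hyperplane of $\mathcal{DW}_\gamma$ along the facet $\mathcal{F}$ must lie in $\mathcal{F}$ is false. Take $n=1$ and $\mathcal{W}_\gamma=\mathrm{conv}\{A,B,C\}$ with $A=(0,1)$, $B=(2,2)$, $C=(-1,-2)$ (the origin is interior). The outward normal cone at $A$ is generated by $(-1,2)$ and $(-3,1)$, and one checks $(0,1)=A/\|A\|\notin N_A$; correspondingly the facet of $\mathcal{DW}_\gamma$ dual to $A$ is the segment from $(\tfrac12,-1)$ to $(3,-1)$ on the line $y=-1$, which does \emph{not} contain the foot $(0,-1)$. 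So ``convexity of $\mathcal{DW}_\gamma$'' alone does not force what you need.

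Worse, the obstruction is not just a bad choice of apex. Round off the corners at $B$ and $C$ while keeping the two edges incident to $A$ straight: then $A$ is the \emph{only} apex of $\mathcal{W}_\gamma$, its normal cone is unchanged, and the unique flat piece of $\partial\mathcal{DW}_\gamma$ is still the segment $[(\tfrac12,-1),(3,-1)]$, which admits no sub-ball centred at a point $M$ parallel to the normal $(0,1)$. Equivalently, the only $P$ with $\mathbb{R}_+P\cap\partial\mathcal{W}_\gamma=A$ is $P=(0,1)$, but $\gamma(0,1)=h_{\mathcal{W}_\gamma}(0,1)$ is not a local maximum (it increases toward the $B$-direction), so no $P$ satisfies statement~(2) of Theorem~\ref{maintheorem}. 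Hence $(1)\Rightarrow(2)$ of the Corollary fails for this body, and your gap cannot be filled without adding a hypothesis---for instance, strengthening (1) to ``$\partial\mathcal{W}_\gamma$ contains an apex point $A$ with $\gamma(A/\|A\|)$ a local maximum,'' which is exactly condition~(2) of Theorem~\ref{maintheorem} and makes both implications immediate.
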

\section{Proof of Theorem \ref{theorem2}}
\begin{lemma}\label{lemmaintersection}
Let $\widetilde{C}$ be a spherical convex body of $S^{n+1}$. 
Then there exists a point $\widetilde{P}\in S^{n+1}$ and $\delta>0$ such that
\[
B_s^{n+1}(\widetilde{P}, \delta)\subset (\widetilde{C}\cap \widetilde{C}^\circ).
\]
\end{lemma}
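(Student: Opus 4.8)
The plan is to locate a point that lies in the interior of both $\widetilde{C}$ and its spherical polar $\widetilde{C}^\circ$, and then use openness of the interiors to extract a small ball around it. First I would recall that, since $\widetilde{C}$ is a spherical convex body, it is in particular hemispherical, so there is a point $\widetilde{P}_0 \in S^{n+1}$ with $\widetilde{C}\subset \mathrm{int}\,H(\widetilde{P}_0)$; equivalently, $|\widetilde{P}_0\widetilde{X}|<\pi/2$ for every $\widetilde{X}\in\widetilde{C}$. I claim $\widetilde{P}_0$ (or a nearby point) works. Indeed, $\widetilde{C}\subset H(\widetilde{X})$ for every $\widetilde{X}\in\widetilde{C}$ would give $\widetilde{P}_0\in\widetilde{C}^\circ$ directly, but that inclusion need not hold for a general convex body; so instead one should choose $\widetilde{P}_0$ to be an interior point of $\widetilde{C}$ that is simultaneously "deep" enough. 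The cleaner route: pick any interior point $\widetilde{P}$ of $\widetilde{C}$, and show $\widetilde{P}$ is also an interior point of $\widetilde{C}^\circ$ provided $\widetilde{C}$ is small enough — but since $\widetilde{C}$ is only assumed hemispherical, this requires an argument, which is the crux below.

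The key step is this: because $\widetilde{C}$ is hemispherical and closed, $\mathrm{s\text{-}conv}(\widetilde{C})$ is a spherical convex body, and by Proposition \ref{dualconvex} we have $\widetilde{C}=\widetilde{C}^{\circ\circ}$ when $\widetilde{C}$ itself is spherical convex. Now $\widetilde{C}^\circ=\bigcap_{\widetilde{X}\in\widetilde{C}}H(\widetilde{X})$ is an intersection of hemispheres whose centers all lie in the hemispherical set $\widetilde{C}$, hence $\widetilde{C}^\circ$ contains the "spherical polar cone point" and in fact contains a point $\widetilde{P}$ with $|\widetilde{P}\widetilde{X}|\le\pi/2$ for all $\widetilde{X}\in\widetilde{C}$; this $\widetilde{P}$ can be taken to be an interior point of $\widetilde{C}^\circ$ because $\widetilde{C}$, being hemispherical, is contained in an \emph{open} hemisphere $\mathrm{int}\,H(\widetilde{Q})$, so $\widetilde{Q}\in\mathrm{int}\,\widetilde{C}^\circ$. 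So $\mathrm{int}\,\widetilde{C}^\circ\neq\emptyset$. Symmetrically, applying the same reasoning to $\widetilde{C}^\circ$ (which is a spherical convex body, being closed and hemispherical — it misses the hemisphere centered at any interior point of $\widetilde{C}$), we get $\mathrm{int}\,\widetilde{C}^{\circ\circ}=\mathrm{int}\,\widetilde{C}\neq\emptyset$. What remains is to show these two interiors \emph{overlap}. For this I would argue: if $\widetilde{P}\in\mathrm{int}\,\widetilde{C}$, then every $\widetilde{X}\in\widetilde{C}$ satisfies $|\widetilde{P}\widetilde{X}|<\pi/2$ only if $\widetilde{C}$ has diameter $<\pi/2$; in general choose $\widetilde{P}$ to be a point of $\widetilde{C}$ realizing the inradius, equivalently the center of a largest inscribed ball — such a $\widetilde{P}$ lies in $\mathrm{int}\,\widetilde{C}$, and by a standard duality computation its spherical distance to every supporting hemisphere's center is at most $\pi/2$, placing $\widetilde{P}\in\widetilde{C}^\circ$ as well.

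The main obstacle is precisely this last point — producing a single $\widetilde{P}$ lying in \emph{both} interiors. I expect to handle it as follows: let $\widetilde{P}$ be an interior point of $\widetilde{C}$, so there is $\rho>0$ with $B_s^{n+1}(\widetilde{P},\rho)\subset\widetilde{C}$. If in addition $\widetilde{P}\in\mathrm{int}\,\widetilde{C}^\circ$, then openness gives $\sigma>0$ with $B_s^{n+1}(\widetilde{P},\sigma)\subset\widetilde{C}^\circ$, and taking $\delta=\min\{\rho,\sigma\}$ finishes the proof since $B_s^{n+1}(\widetilde{P},\delta)\subset\widetilde{C}\cap\widetilde{C}^\circ$. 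To arrange $\widetilde{P}\in\mathrm{int}\,\widetilde{C}^\circ$: pick $\widetilde{P}$ as any interior point of $\widetilde{C}$ with $\widetilde{C}\subset\mathrm{int}\,H(\widetilde{P})$ — such a point exists because $\widetilde{C}$ is hemispherical, so some open hemisphere $\mathrm{int}\,H(\widetilde{Q}_0)$ contains $\widetilde{C}$, and then any $\widetilde{P}$ sufficiently close to $\widetilde{Q}_0$ and lying in $\mathrm{int}\,\widetilde{C}$ (which is nonempty and, after possibly shrinking by an isometry toward $\widetilde{Q}_0$, can be assumed to meet a neighborhood of $\widetilde{Q}_0$) still satisfies $\widetilde{C}\subset\mathrm{int}\,H(\widetilde{P})$. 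Then for every $\widetilde{X}\in\widetilde{C}$ we have $|\widetilde{P}\widetilde{X}|<\pi/2$, hence $\widetilde{P}\in H(\widetilde{X})$ for all $\widetilde{X}\in\widetilde{C}$, i.e. $\widetilde{P}\in\widetilde{C}^\circ$; moreover since the inequalities are strict and $\widetilde{C}$ is compact, $\widetilde{P}$ is in fact an interior point of $\widetilde{C}^\circ$. This completes the argument.
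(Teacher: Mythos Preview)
Your proposal correctly identifies the goal --- producing a single point $\widetilde{P}$ in both $\mathrm{int}\,\widetilde{C}$ and $\mathrm{int}\,\widetilde{C}^\circ$ --- but the arguments you offer for that existence are either circular or rest on a mix-up. In the incenter paragraph you say the incenter has ``distance to every supporting hemisphere's center at most $\pi/2$, placing $\widetilde{P}\in\widetilde{C}^\circ$''; but the centers of supporting hemispheres of $\widetilde{C}$ are the points of $\partial\widetilde{C}^\circ$, so that condition only says $\widetilde{P}\in\bigcap_{\widetilde{R}\in\partial\widetilde{C}^\circ}H(\widetilde{R})=\widetilde{C}^{\circ\circ}=\widetilde{C}$, which is trivial and does not give $\widetilde{P}\in\widetilde{C}^\circ$. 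In the final paragraph you want $\widetilde{P}\in\mathrm{int}\,\widetilde{C}$ with $\widetilde{C}\subset\mathrm{int}\,H(\widetilde{P})$; but the second condition is exactly $\widetilde{P}\in\mathrm{int}\,\widetilde{C}^\circ$, so you are assuming precisely the lemma. Your justification (``take $\widetilde{P}$ close to $\widetilde{Q}_0$ and in $\mathrm{int}\,\widetilde{C}$'') fails because the hemispherical witness $\widetilde{Q}_0$ need not lie anywhere near $\widetilde{C}$ --- think of a long thin spherical triangle of diameter close to $\pi$ --- and ``shrinking by an isometry toward $\widetilde{Q}_0$'' is not a legitimate move on a fixed $\widetilde{C}$.

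The paper closes this gap with a one-line sandwich that you never reach: choose $\widetilde{P}$ so that there are \emph{concentric} caps $B_s^{n+1}(\widetilde{P},\delta_1)\subset\widetilde{C}\subset B_s^{n+1}(\widetilde{P},\delta_2)$ with $\delta_2<\pi/2$ (take $\widetilde{P}$ to be the circumcenter of $\widetilde{C}$, which lies in $\widetilde{C}$ by the standard argument, and note the circumradius is $<\pi/2$ since $\widetilde{C}$ is hemispherical). Then Lemma~\ref{lemmainclusion} applied to the outer inclusion yields $B_s^{n+1}(\widetilde{P},\tfrac{\pi}{2}-\delta_2)=B_s^{n+1}(\widetilde{P},\delta_2)^\circ\subset\widetilde{C}^\circ$, and $\delta=\min\{\delta_1,\tfrac{\pi}{2}-\delta_2\}$ works. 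The missing idea in your write-up is precisely this use of the circumscribed cap together with the polar of a cap being a concentric cap of complementary radius; once you have that, the whole lemma is two lines.
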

\begin{proof}
Let $B_s^{n+1}(\widetilde{P}, \delta_1)$ and $B_s^{n+1}(\widetilde{P}, \delta_2)$ be the circumscribed cap and inscribed cap of
$\widetilde{C}$ respectively, 
where $0<\delta_1<\delta_2<\pi/2$.
Then by Lemma \ref{lemmainclusion}, it follows that 
\[
B_s^{n+1}(P, \delta_2)^\circ\subset \widetilde{C}^\circ\subset B_s^{n+1}(\widetilde{P}, \delta_1)^\circ.
\]
Set $\delta=\mbox{min}\{\delta_1, \frac{\pi}{2}-\delta_2\}$. 
Thus we have the inclusion $B_s^{n+1}(P, \delta)\subset (\widetilde{C}\cap \widetilde{C}^\circ)$.
\end{proof}
\begin{lemma}[\cite{hwam}]\label{dualconstantwidth} 
Let $\widetilde{C}$ be a spherical convex body in $S^{n+1}$, and $0<\delta<\pi$. The
following two assertions are equivalent:
\begin{enumerate}
\item $\widetilde{C}$ is of constant width $\delta$.
\item $\widetilde{C}^\circ$ is of constant width $\pi-\delta$.
\end{enumerate}
\end{lemma}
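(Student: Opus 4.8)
The plan is to push everything through the polarity dictionary already set up in the paper and thereby reduce the asserted duality to a purely intrinsic statement, at which point $\widetilde{C}^\circ$ disappears entirely. First I would record the dictionary of supporting hemispheres. Applying Lemma \ref{boundarysupport} and Lemma \ref{lemmadualboundary} to the body $\widetilde{C}^\circ$ and using $(\widetilde{C}^\circ)^\circ=\widetilde{C}$ (Proposition \ref{dualconvex}), one gets that $H(\widetilde{Q})$ supports $\widetilde{C}$ if and only if $\widetilde{Q}\in\partial\widetilde{C}^\circ$, and symmetrically $H(\widetilde{A})$ supports $\widetilde{C}^\circ$ if and only if $\widetilde{A}\in\partial\widetilde{C}$. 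Combined with $\Delta(H(\widetilde{P})\cap H(\widetilde{Q}))=\pi-|\widetilde{P}\widetilde{Q}|$, this turns the definition of width into the closed formula
\[
\mbox{width}_{H(\widetilde{P})}(\widetilde{C})=\pi-\max_{\widetilde{Q}\in\partial\widetilde{C}^\circ}|\widetilde{P}\widetilde{Q}|,\qquad \widetilde{P}\in\partial\widetilde{C}^\circ,
\]
together with the analogous formula for $\widetilde{C}^\circ$ obtained by exchanging the roles of $\widetilde{C}$ and $\widetilde{C}^\circ$. Geometrically this is a lune--chord polarity: by Lemma \ref{lemmainclusion} a lune $H(\widetilde{P})\cap H(\widetilde{Q})$ circumscribing $\widetilde{C}$ polarizes to the boundary chord $\widetilde{P}\widetilde{Q}$ of $\widetilde{C}^\circ$, whose length $|\widetilde{P}\widetilde{Q}|$ is exactly $\pi$ minus the thickness of the lune.

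Second, I would use the formula for $\widetilde{C}^\circ$ to rewrite assertion (2). Since the centres of the supporting hemispheres of $\widetilde{C}^\circ$ are precisely the points of $\partial\widetilde{C}$, the body $\widetilde{C}^\circ$ has constant width $\pi-\delta$ exactly when
\[
\max_{\widetilde{B}\in\partial\widetilde{C}}|\widetilde{A}\widetilde{B}|=\delta\qquad\mbox{for every }\widetilde{A}\in\partial\widetilde{C};
\]
that is, exactly when the spherical diameter of $\widetilde{C}$ measured from every boundary point equals $\delta$. Thus the equivalence (1)$\iff$(2) collapses to the intrinsic assertion that a spherical convex body $\widetilde{C}\subset S^{n+1}$ has constant width $\delta$ if and only if it has constant boundary diameter $\delta$, the spherical analogue of the classical Euclidean coincidence of width and diameter for constant-width bodies. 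The polarity has done its job: no reference to $\widetilde{C}^\circ$ survives in this reformulation.

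Third, I would prove this intrinsic equivalence by the thinnest-lune argument. For the direction \emph{constant width $\Rightarrow$ constant diameter}, fix $\widetilde{A}\in\partial\widetilde{C}$ and a supporting hemisphere $H(\widetilde{P})$ at $\widetilde{A}$; let $H(\widetilde{Q})$ realize $\mbox{width}_{H(\widetilde{P})}(\widetilde{C})=\delta$, so that $|\widetilde{P}\widetilde{Q}|=\pi-\delta$ and $H(\widetilde{Q})$ touches $\widetilde{C}$ at some $\widetilde{B}$. The key geometric step is that in a thinnest circumscribing lune the body touches the two bounding semicircles at the two endpoints of a common perpendicular arc whose length equals the thickness $\delta$, whence $|\widetilde{A}\widetilde{B}|=\delta$; a separate convexity estimate then shows that no boundary point of $\widetilde{C}$ is farther than $\delta$ from $\widetilde{A}$, giving $\max_{\widetilde{B}\in\partial\widetilde{C}}|\widetilde{A}\widetilde{B}|=\delta$. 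The reverse direction runs the same correspondence backwards, and the whole argument dualizes for free through $(\widetilde{C}^\circ)^\circ=\widetilde{C}$. I expect the main obstacle to lie precisely in this geometric step: verifying that the thinnest lune is attained at the feet of its common perpendicular and that this double-normal chord is simultaneously the width and the diameter, uniformly over all boundary points. Non-smooth points are the delicate case, where a corner of $\widetilde{C}$ corresponds to a great-circle segment of $\partial\widetilde{C}^\circ$; here I would invoke Lemma \ref{lemmanonsmooth} to control the extreme supporting hemispheres, reducing the configuration to a two-dimensional great subsphere so that the planar relation $\angle_s\widetilde{P}+|\widetilde{Q}\widetilde{R}|=\pi$ applies, with the higher-dimensional case following by taking such sections.
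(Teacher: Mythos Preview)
The paper does not prove this lemma at all; it is imported verbatim from the cited reference \cite{hwam} and stated without argument. There is therefore no ``paper's own proof'' to compare against.

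That said, your reduction is sound and in fact reconstructs the architecture of the cited source. The polarity dictionary you extract from Lemmas~\ref{boundarysupport} and~\ref{lemmadualboundary} together with Proposition~\ref{dualconvex} is correct, and the formula
\[
\mbox{width}_{H(\widetilde{A})}(\widetilde{C}^\circ)=\pi-\max_{\widetilde{B}\in\partial\widetilde{C}}|\widetilde{A}\widetilde{B}|,\qquad \widetilde{A}\in\partial\widetilde{C},
\]
does reduce assertion~(2) to ``$\widetilde{C}$ has constant boundary-diameter $\delta$''. The residual equivalence ``constant width $\delta$ $\Leftrightarrow$ constant diameter $\delta$'' that you isolate is precisely the titular result of \cite{hwam}, so you have not bypassed the cited work but rather located exactly where its content enters.

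On the sketch of that intrinsic step: your identification of the obstacle is accurate. The claim that in a thinnest circumscribing lune the body touches both bounding semicircles at the endpoints of the common perpendicular (so that the contact chord has length equal to the thickness) is the substantive lemma, and it requires an argument---it is not automatic from convexity alone. One small caution: Lemma~\ref{lemmanonsmooth} is stated in the paper only for $\widetilde{C}\subset S^2$, so if you invoke it for corners in $S^{n+1}$ you must first pass to a two-dimensional great subsphere, as you indicate; make that passage explicit rather than leaving it as a parenthetical.
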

\begin{lemma}\label{lemmaboundary}
Let $0<\delta<\pi/2$. 
Let $\widetilde{C}$ be a spherical convex body in $S^2$ such that 
the boundary of $\widetilde{C}$ consists only of arcs of circles of radius $\delta$.
Then the boundary of $\widetilde{C}^\circ$ consists only 
arcs of circles of radius $\pi/2-\delta$ and great circle segments.
\end{lemma}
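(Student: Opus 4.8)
The strategy is to transport the circular-arc structure of $\partial\widetilde{C}$ to $\partial\widetilde{C}^\circ$ through the duality between boundary points of $\widetilde{C}$ and its supporting hemispheres. First, using Lemma~\ref{lemmadualboundary} applied to $\widetilde{C}^\circ$ together with $\widetilde{C}^{\circ\circ}=\widetilde{C}$ (cf.\ Proposition~\ref{dualconvex}) and Lemma~\ref{boundarysupport}, one obtains that a point $\widetilde{Q}\in S^2$ lies on $\partial\widetilde{C}^\circ$ precisely when $H(\widetilde{Q})$ is a supporting hemisphere of $\widetilde{C}$. Moreover $\partial\widetilde{C}$ contains no arc of a great circle: a great circle meets the circle of spherical radius $\delta\ (\neq\pi/2)$ underlying any boundary arc in at most two points. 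Hence each supporting hemisphere $H(\widetilde{Q})$ of $\widetilde{C}$ touches $\widetilde{C}$ at a single boundary point $\widetilde{P}=\widetilde{P}(\widetilde{Q})$, and it is enough to analyze, for each $\widetilde{P}\in\partial\widetilde{C}$, the set of centers of supporting hemispheres at $\widetilde{P}$, and to check that as $\widetilde{P}$ runs over $\partial\widetilde{C}$ these sets exhaust $\partial\widetilde{C}^\circ$.

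Next comes the local computation at a point $\widetilde{P}$ in the relative interior of a boundary arc, say an arc of the circle of spherical radius $\delta$ centered at $\widetilde{m}$. Since $\delta<\pi/2$, this circle bounds a convex cap on the side of $\widetilde{m}$ and a non-convex region on the other side, so convexity of $\widetilde{C}$ forces $\widetilde{C}$, near $\widetilde{P}$, to lie on the $\widetilde{m}$-side of the circle; in particular $\widetilde{C}$ is smooth at $\widetilde{P}$ and its unique supporting great circle $\partial H(\widetilde{Q})$ is the great circle tangent to that circle at $\widetilde{P}$, with $\widetilde{Q}$ on the $\widetilde{m}$-side. As the spherical distance from $\widetilde{m}$ to $\partial H(\widetilde{Q})$ equals $\pi/2-|\widetilde{m}\widetilde{Q}|$ and must equal $\delta$, we get $|\widetilde{m}\widetilde{Q}|=\pi/2-\delta$. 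Thus $\widetilde{Q}$ lies on the circle of radius $\pi/2-\delta$ centered at $\widetilde{m}$; letting $\widetilde{P}$ vary over the arc, continuity of the support correspondence shows that $\widetilde{Q}(\widetilde{P})$ sweeps out a relatively open sub-arc of $\partial\widetilde{C}^\circ$ lying on that circle.

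It remains to treat the corner points of $\widetilde{C}$, i.e.\ the (finitely many) points where two consecutive arcs meet non-tangentially. At such a corner $\widetilde{P}^{*}$ every supporting hemisphere $H(\widetilde{Q})$ has $|\widetilde{P}^{*}\widetilde{Q}|=\pi/2$, so its center lies on the great circle $\partial H(\widetilde{P}^{*})$; by the notion of extreme supporting hemispheres and Lemma~\ref{lemmanonsmooth}, the centers of supporting hemispheres at $\widetilde{P}^{*}$ form exactly the sub-arc of $\partial H(\widetilde{P}^{*})$ between the two extreme ones, a great-circle segment contained in $\partial\widetilde{C}^\circ$. Combining the three steps, $\partial\widetilde{C}^\circ$ is the union of the radius-$(\pi/2-\delta)$ circular arcs produced by arc-interiors and the great-circle segments produced by corners (a tangential meeting of two arcs simply contributing a smooth junction of two such circular arcs), which is the assertion.

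The step I expect to be the main obstacle is the second one: making rigorous that the supporting great circle of the whole body $\widetilde{C}$ at an interior point of an arc is forced to be the great circle tangent to the underlying circle — that local convexity pins $\widetilde{C}$ to the $\widetilde{m}$-side and hence determines the global supporting hemisphere — and then getting the orientation right so that the distance identity reads $|\widetilde{m}\widetilde{Q}|=\pi/2-\delta$ and not $\pi/2+\delta$. The remaining two steps are essentially bookkeeping with the duality lemmas of Sections~2 and~3.
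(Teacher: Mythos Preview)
Your proposal is correct and follows essentially the same route as the paper: decompose $\partial\widetilde{C}^\circ$ via the support correspondence $\widetilde{P}\mapsto\{\widetilde{Q}:H(\widetilde{Q})\text{ supports }\widetilde{C}\text{ at }\widetilde{P}\}$, send interiors of the radius-$\delta$ arcs to radius-$(\pi/2-\delta)$ arcs, and send corners to great-circle segments via Lemma~\ref{lemmanonsmooth}. The only cosmetic difference is that the paper obtains the radius $\pi/2-\delta$ by noting that the polar of a cap of radius $\delta$ is a cap of radius $\pi/2-\delta$, whereas you compute $|\widetilde{m}\widetilde{Q}|$ directly; your worry about Step~2 is in fact a point the paper glosses over (it simply asserts ``by smoothness of $\widetilde{C}$''), and your treatment of it is already more careful than the original.
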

\begin{proof}
Let  $\mathcal{C}$ be the mapping from  $\partial \widetilde{C}$ to the set consisting of 
subsets of 
$\partial \widetilde{C}^\circ$,  defined by
\[
\mathcal{C}(\widetilde{P})=\{\widetilde{R}\in \partial C^\circ\mid H(\widetilde{R}) \ \mbox{supports}\ \widetilde{C} \ \mbox{at}\ \widetilde{P}\}.
\]
By Lemma \ref{boundarysupport}, 
$\mathcal{C}(\widetilde{P})$ is a subset of $\partial \widetilde{C}^\circ$ for any $\widetilde{P}\in \partial \widetilde{C}$. 
Hence, the mapping $\mathcal{C}$ is well-defined. 
Denote by arc$(\widetilde{P}\widetilde{Q})$ the arc of circle with endpoints  $\widetilde{P}$ and $\widetilde{Q}$.
Let $\widetilde{P}$ be a boundary point of $\widetilde{C}$. 
By the assumption, there exists an arc($\widetilde{Q}\widetilde{R}$) of the boundary of cap $S$ of radius $\delta$ contains $\widetilde{P}$.
Since $S$ is of constant width $2\delta$, it is clear that $S^\circ$ is of constant width $\pi-2\delta$. 
Let $H(\widetilde{Q}_1), H(\widetilde{R}_1)$ be supporting hemispheres of $S$ at $\widetilde{Q}, \widetilde{R}$ respectively. 
\par
If $\widetilde{P}$ is a relative interior point of arc($\widetilde{Q}\widetilde{R}$), then by smoothness of $\widetilde{C}$, 
$\mathcal{C}(P)$ is a relative interior point of the arc($\widetilde{Q}_1\widetilde{R}_1$). 
Then it follows that
\[
\mathcal{C}(\mbox{arc}(\widetilde{Q}\widetilde{R})-\{\widetilde{Q},\widetilde{R}\})=\mbox{arc}(\widetilde{Q}_1\widetilde{R}_1)-\{\widetilde{Q}_1,\widetilde{R}_1\},
\]
is an arc of circle of radius $\pi/2-\delta$.
\par
If $\widetilde{P}$ is an endpoint of arc($\widetilde{Q}\widetilde{R}$), then by assumption, 
there exists an arc($\widetilde{Q}^\prime \widetilde{R}^\prime$) of the boundary of cap $S^\prime$  of radius $\delta$
such that 
\[
\widetilde{P}=\mbox{arc}(\widetilde{Q}\widetilde{R})\cap \mbox{arc}(\widetilde{Q}^\prime \widetilde{R}^\prime).
\]
Then by Lemma \ref{lemmanonsmooth}, it follows that $\mathcal{C}(P)$ is a segment of great circle (see Figure 2). 
\end{proof}

\begin{figure}[htb]
  \includegraphics[clip,width=9.0cm]{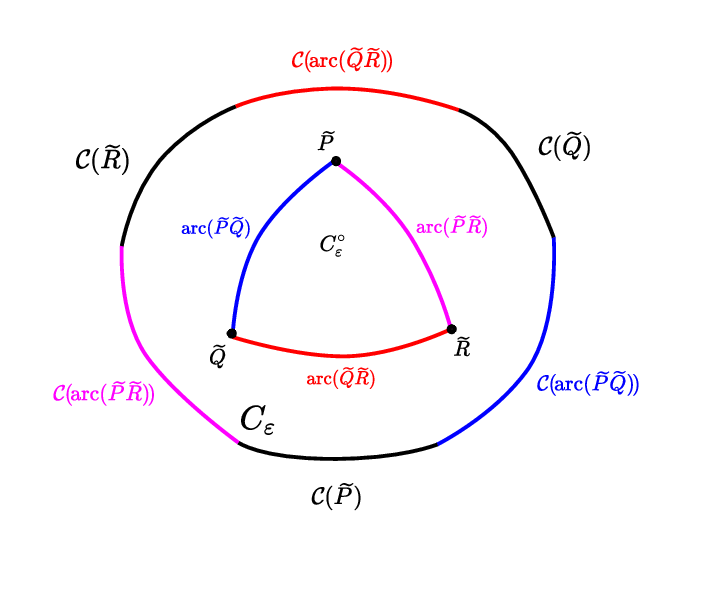}
  \caption{An illustration of corresponding part}
  \label{figure1}
\end{figure}

We are now in position to prove Theorem \ref{theorem2}.
\par
Since $\widetilde{C}$ is of constant width $\tau$,
by Lemma \ref{dualconstantwidth}, we know that $\widetilde{C}^\circ$ is of constant width $\pi-\tau<\pi/2$.
Then by Theorem \ref{theorem3}, for any given $\varepsilon>0$ there exists a convex body 
$\widetilde{C}_\varepsilon^\circ$ of constant width $\pi-\tau$ 
whose boundary consists only arcs of circles of radius $\Delta(\widetilde{C}^\circ)=\pi-\tau$ 
such that 
\[
h(\widetilde{C}^\circ, \widetilde{C}_\varepsilon^\circ)\leq \varepsilon.
\]
By Proposition \ref{dualconvex}, it follows that $\widetilde{C}_\varepsilon=\widetilde{C}_\varepsilon^{\circ\circ}$ is of constant width $\tau$.
Moreover, by Lemma \ref{lemmaintersection} and Proposition \ref{dualisometric}, it follows that
\[
h(\widetilde{C}^\circ, \widetilde{C}_\varepsilon^\circ)=h(\widetilde{C}, \widetilde{C}_\varepsilon).
\]
Therefore, by assumption and Lemma \ref{lemmaboundary}, 
we know that
the boundary of $\widetilde{C}_\varepsilon$ consists only of arcs of circles of radius $\tau-\frac{\pi}{2}$ and great circle segments.
 \hfill{$\square$}
\begin{corollary}
Let $\widetilde{C}$ be a spherical convex body of constant width $\tau>\pi/2$. 
Then there exists a sequence $\{\widetilde{C}_i\}_{i=1}^\infty$ of convex bides 
 of constant width $\tau>\pi/2$, 
whose boundary consists only of arcs of circles of radius $\tau$ and great circle segments 
such that 
\[
\lim_{i\to \infty}\widetilde{C}_i=\widetilde{C}
\]
with respect to the Hausdorff distance.
\end{corollary}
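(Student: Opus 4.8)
The plan is to read this Corollary as the sequential reformulation of Theorem \ref{theorem2}, from which it follows by a single diagonal choice of the approximation parameter. First I would fix the spherical convex body $\widetilde{C} \subset S^2$ of constant width $\tau > \pi/2$, and for each positive integer $i$ apply Theorem \ref{theorem2} with $\varepsilon = 1/i$. This yields a body $\widetilde{C}_i := \widetilde{C}_{1/i}$ of constant width $\tau$ whose boundary consists only of arcs of circles of radius $\tau - \frac{\pi}{2}$ and great circle segments, and which satisfies the estimate
\[
h(\widetilde{C}, \widetilde{C}_i) \leq \frac{1}{i}.
\]
Running $i$ over all of $\mathbb{N}$ produces the desired sequence $\{\widetilde{C}_i\}_{i=1}^\infty$, each member of which has exactly the prescribed shape of boundary and the prescribed constant width.

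Next I would extract convergence from the uniform bound. Since $h(\widetilde{C}, \widetilde{C}_i) \leq 1/i$ for every $i$, the sequence of Hausdorff distances is dominated by $1/i \to 0$, so $h(\widetilde{C}, \widetilde{C}_i) \to 0$ as $i \to \infty$. By the definition of convergence with respect to the Hausdorff metric this is precisely the assertion $\lim_{i\to\infty}\widetilde{C}_i = \widetilde{C}$, which completes the argument. No compactness or subsequence extraction is needed, since the approximants are handed to us directly by Theorem \ref{theorem2} together with an explicit error bound.

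There is no substantive obstacle here: the full geometric content of the statement — existence of constant-width approximants bounded by circular arcs of the specified radius and by great circle segments — is already carried by Theorem \ref{theorem2}, and the passage from an $\varepsilon$-approximation to a convergent sequence is the routine device $\varepsilon_i = 1/i$. The one point deserving a word of care is the radius of the arcs: Theorem \ref{theorem2} (in agreement with the abstract and with the dualization via Lemma \ref{lemmaboundary}, where $\delta = \pi - \tau$ gives dual arcs of radius $\frac{\pi}{2} - \delta = \tau - \frac{\pi}{2}$) produces arcs of radius $\tau - \frac{\pi}{2}$, whereas the Corollary as printed writes radius $\tau$. I would treat the printed $\tau$ as a typographical slip for $\tau - \frac{\pi}{2}$, since the radius appearing in the conclusion must coincide with the one furnished by the theorem being invoked; with that correction the proof is a one-line consequence of Theorem \ref{theorem2}.
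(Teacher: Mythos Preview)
Your proposal is correct and matches the paper's approach: the paper states this Corollary immediately after the proof of Theorem~\ref{theorem2} without giving any separate argument, so it is meant to be read as the obvious sequential reformulation obtained by taking $\varepsilon=1/i$, exactly as you do. Your observation that the printed radius $\tau$ should be $\tau-\frac{\pi}{2}$ (to agree with Theorem~\ref{theorem2} and with the dualization in Lemma~\ref{lemmaboundary}) is also well taken.
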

We conjecture that any spherical convex body of constant width $\pi/2$ can be approximated by a sequence of spherical polytope of constant width $\pi/2$. 
\par
Any convex bodies of constant width $\tau\neq \pi/2$ can not be approximated by 
spherical polytope of constant width $\tau$. 
This is because if a spherical convex polytope $P$ is of constant width $\tau$, then $\tau=\pi/2$(\cite{han1}).

\par
\bigskip
{\bf Acknowledgements.}
This work was supported, in partial, by
Natural Science Basic Research Plan in Shaanxi Province of China
(2023-JC-YB-070).

\end{document}